\DeclareOldFontCommand{\rm}{\normalfont\rmfamily}{\mathrm}
\DeclareOldFontCommand{\sf}{\normalfont\sffamily}{\mathsf}
\DeclareOldFontCommand{\tt}{\normalfont\ttfamily}{\mathtt}
\DeclareOldFontCommand{\bf}{\normalfont\bfseries}{\mathbf}
\DeclareOldFontCommand{\it}{\normalfont\itshape}{\mathit}
\DeclareOldFontCommand{\sl}{\normalfont\slshape}{\@nomath\sl}
\DeclareOldFontCommand{\sc}{\normalfont\scshape}{\@nomath\sc}
\newcommand{\supp}{\mathop{\mathrm{supp}}}
\def\l@lstlisting#1#2{\@dottedtocline{1}{0em}{1em}{\hspace{1,5em} Lst. #1}{#2}}
\newtheoremstyle{normal}% normale Schrift
{10pt}% hSpace abovei
{10pt}% hSpace belowi
{}% hBody fonti
{}% hIndent amounti1
{\bfseries}% hTheorem head fonti
{}% Punctuation after theorem headi
{0em}% hSpace after theorem headi2
{\bfseries{\thmname{#1}\thmnumber{ #2}\thmnote{\hspace{0em}(#3)\newline}}}% hTheorem head spec
\newtheoremstyle{standard}  % follow `plain` defaults but change HEADSPACE.
  {10pt}   % ABOVESPACE
  {}   % BELOWSPACE
  {\itshape}  % BODYFONT
  {}       % INDENT (empty value is the same as 0pt)
  {\bfseries} % HEADFONT
  {:}         % HEADPUNCT
  {0.2cm}  % HEADSPACE. `plain` default: {5pt plus 1pt minus 1pt}
  {\bfseries{\thmname{#1}\thmnumber{ #2}\thmnote{ \hspace{0em}(#3)}}}          % CUSTOM-HEAD-SPEC
\newtheoremstyle{mittitel}  % follow `plain` defaults but change HEADSPACE.
  {10pt}   % ABOVESPACE
  {}   % BELOWSPACE
  {\itshape}  % BODYFONT
  {}       % INDENT (empty value is the same as 0pt)
  {\bfseries} % HEADFONT
  {:}         % HEADPUNCT
  {0.2cm}  % HEADSPACE. `plain` default: {5pt plus 1pt minus 1pt}
  {\bfseries{\thmname{#1}\thmnumber{ #2}\thmnote{ \hspace{0em}(#3)\newline}}}          % CUSTOM-HEAD-SPEC
\DeclareMathOperator{\sgn}{sgn}
\DeclareMathOperator{\spz}{sp_\textit{z}}
\DeclareMathOperator{\sqz}{sq_\textit{z}}
\DeclareMathOperator{\cpz}{cp_\textit{z}}
\DeclareMathOperator{\cqz}{cq_\textit{z}}
\DeclareMathOperator{\sq}{sq}
\DeclareMathOperator{\cp}{cp}
\DeclareMathOperator{\spzm}{sp_{\textit{z}, \textit{m}}}
\DeclareMathOperator{\sqzm}{sq_{\textit{z}, \textit{m}}}
\DeclareMathOperator{\cpzm}{cp_{\textit{z}, \textit{m}}}
\DeclareMathOperator{\cqzm}{cq_{\textit{z}, \textit{m}}}
\DeclareMathOperator{\sinp}{sinp}
\DeclareMathOperator{\sinq}{sinq}
\DeclareMathOperator{\cosp}{cosp}
\DeclareMathOperator{\cosq}{cosq}
\DeclareMathOperator{\s}{\mathcal{S}}
\DeclareMathOperator{\sinpm}{sinp_\textit{m}}
\DeclareMathOperator{\sinqm}{sinq_\textit{m}}
\DeclareMathOperator{\cospm}{cosp_\textit{m}}
\DeclareMathOperator{\cosqm}{cosq_\textit{m}}
\DeclareMathOperator{\D}{\mathcal{D}}
\begin{document}

\newtheorem{thm}{Theorem}[section]
\newtheorem{satz}[thm]{Satz} 
\newtheorem{prop}[thm]{Proposition} 
\newtheorem{lem}[thm]{Lemma}
\newtheorem{kor}[thm]{Korollar} 
\newtheorem{defi}[thm]{Definition} 
\newtheorem{konst}[thm]{Konstruktion}
\newtheorem{bemerkung}[thm]{Remark}
\newtheorem{beispiel}[thm]{Example}

\onehalfspacing

% ----------------------------------------------------------------------------------------------------------
% Verzeichnisse
% ----------------------------------------------------------------------------------------------------------
% TODO Typ vor Nummer
\renewcommand{\cfttabpresnum}{Tab. }
\renewcommand{\cftfigpresnum}{Abb. }
\settowidth{\cfttabnumwidth}{Abb. 10\quad}
\settowidth{\cftfignumwidth}{Abb. 10\quad}
\renewcommand{\arraystretch}{1.5}

\begin{center}
\textbf{\Large{Eigenvalue Approximation for Krein-Feller-Operators}} \\[8pt]
Uta Freiberg\footnote{ Institute of Stochastics and Applications, University of Stuttgart, Pfaffenwaldring 57, 70569 Stuttgart, Germany, e-mail: Lenon.Minorics@mathematik.uni-stuttgart.de}, Lenon Minorics\footnote{ Institute of Stochastics and Applications, University of Stuttgart, Pfaffenwaldring 57, 70569 Stuttgart, Germany, e-mail: Uta.Freiberg@mathematik.uni-stuttgart.de}
\end{center}

\titleformat*{\section}{\large\bfseries}
\titlespacing{\section}{0pt}{12pt plus 4pt minus 2pt}{-6pt plus 2pt minus 2pt}
\textbf{Abstract:}
We study the limiting behavior of the eigenvalues of Krein-Feller-Operators with respect to weakly convergent probability measures.
Therefore, we give a representation of the eigenvalues as zeros of measure theoretic sine functions. Further, we make a proposition about the limiting behavior of the previously determined eigenfunctions. With the main results we finally determine the speed of convergence of eigenvalues and -functions for sequences which converge to invariant measures on the Cantor set.

\section{Introduction}
Let $\mu$ be a non-atomic Borel probability measure on $[0,1]$ and 
\begin{align*}
\D_1^\mu \coloneqq \bigg\{f: [0,1] \longrightarrow \mathbb{R}: ~ \exists ~ f^\mu \in ~&L_2([0,1],\mu): \\& f(x) = f(0) + \int_0^x f^{\mu}(y) ~ d \mu (y), ~~~ x \in [0,1] \bigg\}.
\end{align*}
$f^\mu$ is called $\boldsymbol \mu$\textbf{-Derivative} of $f$. Further, define
\begin{align*}
\D \coloneqq \bigg\{ f \in C^1([0,1]) :& ~ \exists ~ (f')^\mu \in L_2([0,1], \mu): \\& f'(x) = f'(0) + \int_0^x (f')^\mu(y) ~  d \mu (y), ~~~ x \in [0,1]  \bigg\}.
\end{align*}
Then, the Krein-Feller-Operator w.r.t. $\mu$ is given as
\begin{align*}
\frac{d}{d \mu} \frac{d}{d x}: \D &\longrightarrow  L_2([0,1], \mu) \\
f &\mapsto ~~ (f')^\mu.
\end{align*}
Analytic properties of Krein-Feller-Operators are developed in \cite{Fre03}. Many papers deal with this operator and with the resulting eigenvalue problem, see for example Feller \cite{Fel57},  Freiberg et al. \cite{Fre03, Fre05, Fre08, Fre11, Fre19, Fre04, Fre18, Fre02}, Fujita \cite{Fuj87}, Minorics \cite{Min17, Min18}, Ngai et al. \cite{Nga11, Nga18, Bir03, Che10, Hu06} and for higher dimensional generalizations Freiberg and Seifert \cite{Fre15} and Solomyak et al. \cite{Sol95, Nai95}.
\\
In this paper we consider the corresponding eigenvalue problem 
\begin{align}
\frac{d}{d \mu} \frac{d}{d x} f = - \lambda f \label{eigenwertproblem}
\end{align}
with Dirichlet or Neumann boundary conditions. In \cite[Theorem 1]{Kue80} it is shown, that the eigenvalues of \eqref{eigenwertproblem} with Dirichlet or Neumann boundary conditions are countable infinite, have no finite accumulation points and multiplicity one. Moreover, if the sequence of Neumann eigenvalues is given by $(\lambda_{N,m})_{m \in \mathbb{N}_0}$ and the sequence of Dirichlet eigenvalues by $(\lambda_{D,m})_{m \in \mathbb{N}}$, then
\begin{align*}
0 = \lambda_{N,0} < \lambda_{N,1} < \lambda_{N,2} < ... ~~~ \text{ and } ~~~ 0 < \lambda_{D,1} < \lambda_{D,2} < ..., 
\end{align*}
where $\lambda$ is a Neumann and Dirichlet eigenvalue, if it solves \eqref{eigenwertproblem} with Neumann and Dirichlet boundary conditions, respectively.
In \cite[Chapter 4]{Arz15} a concept of measure theoretic trigonometric functions is developed, whereby the zeros of measure theoretic sine functions are the eigenvalues of \eqref{eigenwertproblem} with Dirichlet or Neumann boundary conditions. \\
We consider sequences of probability measures $(\mu_n)_n$ those distribution functions $(F_n)_n$ converge uniformly to the distribution function $F$ of some Borel probability measure $\mu$ and show that the corresponding eigenvalues satisfy
\begin{align*}
\begin{split}
|\lambda_{N,m} - \lambda_{N,m,n}| &\le c(m) \lVert F - F_n\rVert_\infty \\ |\lambda_{D,m} - \lambda_{D,m,n}|&\le  c(m) \lVert F - F_n\rVert_\infty
\end{split}~~~~~~~ n \ge n_0(m),
\end{align*}
where $(\lambda_{N,m,n})_{m \in \mathbb{N}_0}$ denotes the sequence of Neumann and $(\lambda_{D,m,n})_{m \in \mathbb{N}}$ the sequence of Dirichlet eigenvalues of the Krein-Feller-Operator w.r.t. $\mu_n$ , respectively.

As an example, we then consider Krein-Feller-Operators w.r.t. $\mu^w$, where $\mu^w$ is given as the unique invariant Borel probability measure to the IFS $\s = (S_1,S_2)$, $S_1(x) = \frac{1}{3}x$, $S_2(x) = \frac{1}{3}x + \frac{2}{3}$, $x \in [0,1]$ and weight vector $w = (w_1,w_2)$, $w_1 \in (0,1)$, $w_2 = 1-w_1$. Therefore $\mu^w$ is singular w.r.t. the one-dimensional Lebesgue measure. The concept of invariant measures is developed in \cite{Hut81}. We construct a sequence of non-atomic Borel probability measures $(\mu_n^w)_{n \in \mathbb{N}_0}$, $\mu_n^w \rightharpoonup \mu^w$ and get
\begin{align*}
\begin{split}
|\lambda_{N,m} - \lambda_{N,m,n}| &\le c(m) (w_1 \vee w_2)^n \\ |\lambda_{D,m} - \lambda_{D,m,n}|&\le  c(m) (w_1 \vee w_2)^n
\end{split}~~~~~~~ n \ge n_0(m).
\end{align*}
For a treatment of the classical theory of boundary problems on the real line see e.g. Atkinson \cite{Atk}.
\newpage
\section{Measure theoretic trigonometric functions}
Let $\mu$ be a non-atomic Borel probability measure on $[0,1]$.
\begin{defi} \label{trig}
Let $x \in [0,1]$, $z \in \mathbb{R}$ and $p_0(x) \coloneqq q_0(x) \coloneqq 1$. For $n \in \mathbb{N}$ let
\begin{align*}
p^\mu_n(x) \coloneqq p_n (x) \coloneqq
\begin{cases}
\int_0^x p_{n-1} (t) \, d \mu (t), & \text{if } n \text{ is odd} \\
\int_0^x p_{n-1} (t) \, d t, & \text{if } n \text{ is even}
\end{cases}
\\\\
q^\mu_n(x) \coloneqq q_n  (x) \coloneqq
\begin{cases}
\int_0^x q_{n-1} (t) \, d t,& \text{if } n \text{ is odd} \\
\int_0^x q_{n-1} (t) \, d\mu (t), & \text{if } n \text{ is even}
\end{cases}
\end{align*}
and
\begin{align*}
\spz (x) &\coloneqq \sum_{n=0}^\infty (-1)^n \, z^{2n+1} p_{2n+1}(x), ~~~~~~~~~~~~ &&\sqz (x) \coloneqq \sum_{n=0}^\infty (-1)^n \, z^{2n+1} q_{2n+1}(x), \\[8pt]
\cpz (x) &\coloneqq \sum_{n=0}^\infty (-1)^n\, z^{2n} p_{2n}(x), ~~~~~~~~~~~ &&\cqz (x)  \coloneqq \sum_{n=0}^\infty (-1)^n\, z^{2n} q_{2n} (x).
\end{align*}
\end{defi}
\begin{lem} \label{absch}
For all $x \in [0,1]$, $z \in \mathbb{R}$ and $n \in \mathbb{N}_0$ holds
\begin{align*}
 p_{2n+1}(x) &\le \frac{1}{n!} (q_2(x))^n , ~~~~~~~~~~~~~~~~~~ p_{2n}(x) \le \frac{1}{n!} (p_2(x))^n, \\[8pt]
 q_{2n+1}(x) &\le \frac{1}{n!} (p_2(x))^n , ~~~~~~~~~~~~~~~~~~ q_{2n}(x) \le \frac{1}{n!} (q_2(x))^n.
\end{align*}
\end{lem}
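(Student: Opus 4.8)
The plan is to prove all four inequalities simultaneously by induction on $n$, the whole argument resting on two facts about $p_2$ and $q_2$ that I would establish first. Since $p_0=q_0=1\ge 0$ and every subsequent $p_n,q_n$ is an integral of a non-negative function against $dt$ or $d\mu$, an immediate induction shows that all $p_n,q_n$ are non-negative and non-decreasing on $[0,1]$; in particular $p_2$ and $q_2$ are non-decreasing. Moreover, since $\mu$ has no atoms, $p_1(x)=\mu([0,x])$ and $q_2(x)=\int_0^x t\,d\mu(t)$ are continuous, and $p_2(x)=\int_0^x p_1(t)\,dt$ is even Lipschitz (because $0\le p_1\le 1$); hence, by the change-of-variables formula $\int_0^x g(F)\,dF=\int_{F(0)}^{F(x)}g$ for a continuous non-decreasing $F$ — or, for $p_2$, simply by the fundamental theorem of calculus — one obtains, for every $k\in\mathbb{N}$,
\begin{align*}
\int_0^x p_2(t)^{k-1}p_1(t)\,dt=\frac{p_2(x)^k}{k}
\qquad\text{and}\qquad
\int_0^x q_2(t)^{k-1}\,t\,d\mu(t)=\frac{q_2(x)^k}{k},
\end{align*}
using that $dp_2=p_1\,dt$, $dq_2=t\,d\mu(t)$ and $p_2(0)=q_2(0)=0$.

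The base case $n=0$ is immediate, the four assertions reading $1\le1$, $\mu([0,x])\le1$, $1\le1$ and $x\le1$. For the inductive step, assume all four estimates hold for $n$ and prove them for $n+1$; the four cases are identical up to exchanging $p\leftrightarrow q$ and $dt\leftrightarrow d\mu$, so I carry out the one for $p_{2(n+1)+1}=p_{2n+3}$. Unwinding Definition~\ref{trig} twice gives $p_{2n+3}(x)=\int_0^x\int_0^t p_{2n+1}(s)\,ds\,d\mu(t)$; inserting the hypothesis $p_{2n+1}(s)\le\frac{1}{n!}q_2(s)^n$ and then the monotonicity bound $q_2(s)^n\le q_2(t)^n$ for $0\le s\le t$ yields
\begin{align*}
p_{2n+3}(x)
&\le\frac{1}{n!}\int_0^x\int_0^t q_2(s)^n\,ds\,d\mu(t)
\le\frac{1}{n!}\int_0^x q_2(t)^n\Bigl(\int_0^t ds\Bigr)d\mu(t) \\
&=\frac{1}{n!}\int_0^x q_2(t)^n\,t\,d\mu(t)
=\frac{q_2(x)^{n+1}}{(n+1)!},
\end{align*}
the last equality being the second identity above. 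The estimates for $p_{2n+2}$, $q_{2n+2}$ and $q_{2n+3}$ follow by exactly the same three moves — unwind the definition twice, insert the induction hypothesis, enlarge the inner argument by monotonicity — and then close with the first identity (together with $\int_0^t d\mu(s)=p_1(t)$) or the second, according as the outermost integration is against $dt$ or $d\mu$.

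I expect the only genuinely delicate point to be the two ``power rule'' identities: one has to notice that it is precisely the non-atomicity of $\mu$ that makes $q_2$ continuous, so that $\int_0^x q_2(t)^{k-1}\,dq_2(t)=q_2(x)^k/k$ holds without any jump corrections (for $p_2$ this is automatic, $p_2$ being absolutely continuous). The non-negativity and monotonicity bookkeeping, and the observation that replacing $q_2(s)^n$ by $q_2(t)^n$ (respectively $p_2(s)^n$ by $p_2(t)^n$) inside the nested integral is exactly what makes the factorial constants line up, are then routine.
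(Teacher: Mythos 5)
Your proof is correct. Note that the paper itself offers no argument for this lemma --- it simply cites Lemma 2.4 of Freiberg--L\"obus \cite{Fre04} --- so there is no in-paper proof to compare against; what you have written is a self-contained replacement, and it is a sound one. The simultaneous induction on all four inequalities is the natural route, and each of the four inductive steps does close as you claim: unwinding the recursion twice always produces a nested integral $\int_0^x\int_0^t(\cdot)$ in which the inner and outer integrators alternate between $dt$ and $d\mu$, the monotonicity bound pulls the $n$-th power out to the outer variable, and the leftover inner integral is exactly $\int_0^t ds=t$ or $\int_0^t d\mu(s)=p_1(t)$, which is precisely the density of $dq_2$ or $dp_2$ respectively --- this is why the factorials work out, as you observe. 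The one step that genuinely needs justification is the ``power rule'' $\int_0^x q_2(t)^{k-1}\,t\,d\mu(t)=q_2(x)^k/k$, and you identify the right mechanism: the pushforward of the Lebesgue--Stieltjes measure $dq_2$ under the continuous non-decreasing map $q_2$ is Lebesgue measure on $[0,q_2(x)]$, and continuity of $q_2$ is exactly where the standing non-atomicity assumption on $\mu$ enters (for $p_2$ the identity is just the fundamental theorem of calculus for a Lipschitz function). I see no gap.
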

\begin{proof}
\cite[Lemma 2.4]{Fre04}.
\end{proof}
\begin{lem}
For fixed $z \in \mathbb{R}$ the series in Definition \ref{trig} converge uniformly absolutely on $[0,1]$ and
\begin{align*}
\frac{d}{d \mu(x)} \spz (x) &= z \cpz (x),   &&  \frac{d}{d x}\sqz = z \cqz(x), \\[10pt]
\frac{d}{d \mu(x)} \cqz(x) &= - z \sqz(x),  && \frac{d}{d x}\cpz = -z \spz(x).   
\end{align*}
\end{lem}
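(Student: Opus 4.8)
The plan is to establish the uniform absolute convergence first and then obtain the four differentiation formulas by termwise differentiation, which is justified precisely by that uniform convergence.

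\textbf{Convergence.} First I would invoke Lemma \ref{absch} to dominate the series. Since $p_2$ and $q_2$ are continuous on the compact interval $[0,1]$ and monotonically increasing (being integrals of nonnegative functions), they attain their maxima at $x=1$; set $M \coloneqq \max\{p_2(1), q_2(1)\}$. Then for fixed $z \in \mathbb{R}$ the general term of $\spz$ is bounded in absolute value by $|z|^{2n+1} p_{2n+1}(x) \le |z| \cdot \frac{(|z|^2 M)^n}{n!}$, uniformly in $x \in [0,1]$, and the same bound (up to the harmless factor $|z|$) controls the terms of $\sqz$, $\cpz$, $\cqz$ via the other three estimates in Lemma \ref{absch}. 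Since $\sum_n \frac{(|z|^2 M)^n}{n!} = \exp(|z|^2 M) < \infty$, the Weierstrass $M$-test gives uniform absolute convergence of all four series on $[0,1]$. This also shows each of $\spz, \sqz, \cpz, \cqz$ is continuous on $[0,1]$.

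\textbf{Differentiation formulas.} Here I would exploit the recursive structure of the $p_n, q_n$ directly rather than differentiating the infinite sums as abstract functions. For $n$ odd, $p_n(x) = \int_0^x p_{n-1}(t)\,d\mu(t)$, so $p_n$ has $\mu$-derivative $p_{n-1}$; for $n$ even, $p_n(x) = \int_0^x p_{n-1}(t)\,dt$, so $p_n$ has ordinary derivative $p_{n-1}$. Consider $\spz(x) = \sum_{n=0}^\infty (-1)^n z^{2n+1} p_{2n+1}(x)$. Each index $2n+1$ is odd, so formally $\frac{d}{d\mu(x)}\spz(x) = \sum_{n=0}^\infty (-1)^n z^{2n+1} p_{2n}(x) = z \sum_{n=0}^\infty (-1)^n z^{2n} p_{2n}(x) = z\,\cpz(x)$. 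To make this rigorous I would write $\spz(x) = \int_0^x \big(\sum_{n=0}^\infty (-1)^n z^{2n+1} p_{2n}(t)\big)\,d\mu(t)$, which is legitimate because the partial sums of $\sum (-1)^n z^{2n+1}p_{2n}$ converge uniformly (again by the $M$-test as above, with $p_{2n}(t) \le \frac{1}{n!}(p_2(t))^n \le \frac{M^n}{n!}$), so the integral and the sum may be interchanged; the interchanged expression is exactly $z\,\cpz$. Hence $\spz \in \D_1^\mu$ with $\mu$-derivative $z\,\cpz$, which is the meaning of $\frac{d}{d\mu(x)}\spz(x) = z\,\cpz(x)$. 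The formula $\frac{d}{dx}\cpz = -z\,\spz$ follows the same way: $\cpz(x) = 1 + \sum_{n=1}^\infty (-1)^n z^{2n} p_{2n}(x)$, each $p_{2n}$ with $n \ge 1$ has even index and hence ordinary derivative $p_{2n-1}$, giving $\frac{d}{dx}\cpz(x) = \sum_{n=1}^\infty (-1)^n z^{2n} p_{2n-1}(x) = -z\sum_{m=0}^\infty (-1)^m z^{2m} p_{2m+1}(x) = -z\,\spz(x)$ after reindexing $m = n-1$. The two $q$-formulas are obtained identically, using that $q_n$ has ordinary derivative $q_{n-1}$ for $n$ odd and $\mu$-derivative $q_{n-1}$ for $n$ even, so $\frac{d}{dx}\sqz = z\,\cqz$ and $\frac{d}{d\mu(x)}\cqz = -z\,\sqz$.

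\textbf{Main obstacle.} The only real subtlety is justifying the interchange of the infinite summation with the (Lebesgue--Stieltjes) integration $\int_0^\cdot \,d\mu$ and with ordinary differentiation. This is not automatic from pointwise convergence, but the uniform bound from Lemma \ref{absch} handles it cleanly: uniform convergence of a series of continuous functions on $[0,1]$ permits interchange with $\int_0^x (\cdot)\,d\mu$ (a finite measure) and with $\int_0^x(\cdot)\,dt$, and once the antiderivative representation $\spz(x) = \spz(0) + \int_0^x z\,\cpz(t)\,d\mu(t)$ is in hand, membership in $\D_1^\mu$ and the stated derivative identity are immediate from the definition of the $\mu$-derivative. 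One should also note $\spz(0) = \sqz(0) = 0$ and $\cpz(0) = \cqz(0) = 1$, which drop out of the constructions above and confirm consistency with the boundary behavior implicit in the recursion.
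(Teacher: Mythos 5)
Your proof is correct and self-contained: the domination of the series via Lemma \ref{absch} together with the Weierstrass $M$-test, followed by termwise integration justified by uniform convergence to obtain the antiderivative representations $\spz(x)=\int_0^x z\,\cpz(t)\,d\mu(t)$ etc., is exactly the standard route. Note that the paper itself offers no proof here but merely cites \cite[Lemma 3.6]{Arz15}; your argument fills in that reference along the expected lines, so there is nothing substantive to compare.
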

\begin{proof}
\cite[Lemma 3.6]{Arz15}.
\end{proof}
\newpage
\begin{thm}
\begin{enumerate}
\item[(i)]
The Neumann eigenvalues $\lambda_{N,m}$, $m \in \mathbb{N}_0$ are the squares of the positive zeros of the function $\sinp(z) \coloneqq \spz(1)$, $z \in \mathbb{R}$. Up to a multiplicative constant, the corresponding eigenfunctions are given by
\begin{align*}
f_{N,m}(x) \coloneqq \cp_{(\lambda_{N,m})^{1/2}}(x), ~~~~ x \in [0,1].
\end{align*}
\item[(ii)]
The Dirichlet eigenvalues $\lambda_{D,m}$, $m \in \mathbb{N}$ are the squares of the non-negative zeros of the function $\sinq(z) \coloneqq \sqz(1)$, $z \in \mathbb{R}$. Up to a multiplicative constant, the corresponding eigenfunctions are given by
\begin{align*}
f_{D,m}(x) \coloneqq \sq_{(\lambda_{D,m})^{1/2}}(x), ~~~~ x \in [0,1].
\end{align*}
\end{enumerate}
\end{thm}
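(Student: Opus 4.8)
The strategy is to show that the eigenvalue problem \eqref{eigenwertproblem} with the respective boundary conditions is equivalent, via the fundamental system $\{\cp_z,\spz\}$ (resp. $\{\cq_z,\sq_z\}$), to the stated zero conditions. First I would observe that by the preceding lemma, for any $z\in\mathbb{R}$ the function $f(x)\coloneqq\cp_z(x)$ lies in $\D$ and satisfies $\frac{d}{d\mu}\frac{d}{dx}\cp_z = \frac{d}{d\mu(x)}(-z\,\spz(x)) = -z^2\cp_z(x)$, so $\cp_z$ solves \eqref{eigenwertproblem} with $\lambda = z^2$; similarly $\sq_z$ solves it with the same $\lambda$. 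Moreover $\cp_z(0)=1$, $(\cp_z)'(0)=0$ (since $\cp_z' = -z\,\spz$ and $\spz(0)=0$), while $\sq_z(0)=0$, $(\sq_z)'(0) = z\,\cq_z(0) = z$. Hence $\cp_z$ is the unique (up to scaling) solution with Neumann condition at $0$, and $\sq_z$ the unique solution with Dirichlet condition at $0$.

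Next I would impose the boundary condition at $x=1$. For the Neumann problem we additionally need $(\cp_z)'(1) = -z\,\spz(1) = 0$, i.e. (for $z\neq 0$) $\sinp(z)=\spz(1)=0$; for the Dirichlet problem we need $\sq_z(1)=0$, i.e. $\sinq(z)=0$. Thus $\lambda=z^2$ is a Neumann (resp. Dirichlet) eigenvalue with eigenfunction $\cp_z$ (resp. $\sq_z$) if and only if $z$ is a zero of $\sinp$ (resp. $\sinq$). Conversely, since by \cite[Theorem 1]{Kue80} all eigenvalues are simple and non-negative, every eigenfunction must be a scalar multiple of the solution satisfying the boundary condition at $0$, so this exhausts the spectrum; the correspondence $\lambda\mapsto z=\sqrt{\lambda}$ between non-negative eigenvalues and non-negative zeros is then a bijection onto the positive (resp. non-negative) zeros. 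The case $z=0$ must be handled separately: $\sinp(0)=0$ trivially and $f\equiv 1$ is the Neumann eigenfunction for $\lambda_{N,0}=0$, which is why one restricts to \emph{positive} zeros of $\sinp$; for $\sinq$, $z=0$ gives $\sq_0\equiv 0$, not an eigenfunction, and indeed $\lambda_{D,1}>0$, consistent with taking non-negative zeros (the value $0$ not actually being attained as a zero of $\sinq$ away from which an eigenfunction exists — here one checks $\sinq'(0)=\cq_0(1)=1\neq0$).

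The main obstacle is the completeness/exhaustion argument: one must argue that every eigenfunction of the Krein-Feller operator is, up to a constant, of the form $\cp_{\sqrt\lambda}$ or $\sq_{\sqrt\lambda}$, and that $\{\cp_z,\spz\}$ (resp. $\{\cq_z,\sq_z\}$) indeed spans the two-dimensional solution space of \eqref{eigenwertproblem} for fixed $\lambda=z^2$. This rests on an existence-and-uniqueness result for the associated integral equation — any solution $f$ of \eqref{eigenwertproblem} satisfies $f(x)=f(0)+f'(0)x - z^2\int_0^x\int_0^t f(s)\,d\mu(s)\,dt$, and a Gronwall/Picard iteration (using the growth bounds of Lemma \ref{absch} to control the Neumann series) shows the solution is determined by the pair $(f(0),f'(0))$ and depends linearly on it. Granting this, the Wronskian-type identity $\cp_z(x)\cq_z(x)+\spz(x)\sq_z(x)\equiv 1$ (or simply the independence of their initial data at $0$) shows $\{\cp_z,\spz\}$ is a basis, and matching initial conditions at $0$ pins down the eigenfunction as claimed; I would cite \cite{Fre03, Arz15} for these structural facts rather than reprove them.
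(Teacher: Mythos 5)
Your argument is correct, and it is essentially the proof of the cited result \cite[Proposition 3.8]{Arz15} — the paper itself offers no proof beyond that citation, so your sketch (compute $\frac{d}{d\mu}\frac{d}{dx}\cpz=-z^2\cpz$ and $\frac{d}{d\mu}\frac{d}{dx}\sqz=-z^2\sqz$, match the initial data at $0$, impose the condition at $1$, and invoke simplicity of the spectrum plus uniqueness for the integral equation to exhaust all eigenfunctions) is the standard route. One cosmetic correction: the fundamental system of $\frac{d}{d\mu}\frac{d}{dx}f=-z^2f$ is $\{\cpz,\sqz\}$ (the functions $\spz$ and $\cqz$ are their respective derivatives and are not themselves in the domain $\D$), not $\{\cpz,\spz\}$; this does not affect your argument, which only uses that $\cpz$ and $\sqz$ are the unique solutions (up to scaling) with $f'(0)=0$ and $f(0)=0$ respectively.
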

\begin{proof}
\cite[Proposition 3.8]{Arz15}
\end{proof}
To prove the following statements, we need the multiplication formula 
\begin{align}
\bigg( \sum_{j=0}^\infty a_{2j}  \bigg) \cdot \bigg( \sum_{k=0}^\infty b_{2k} \bigg) = \sum_{n=0}^\infty \sum_{k=0}^n a_{2k}\, b_{2n-2k}, \label{rechenregel}
\end{align}
where $(a_n)_{n \in \mathbb{N}_0}$ and $(b_n)_{n \in \mathbb{N}_0}$ are absolutely summable sequences.
\begin{lem} \label{nullsummen}
For all $m \in \mathbb{N}$ holds
\begin{align*}
\sum_{n=0}^\infty (-1)^n \lambda^n_{N,m} \sum_{k=0}^n  \, p_{2k}\, p_{2n-2k+1} = 0,~~~~ \sum_{n=0}^\infty (-1)^n \lambda^n_{N,m} \sum_{k=0}^n 2k \, p_{2k}\, p_{2n-2k+1} = 0,
\end{align*}
where $p_n \coloneqq p_n(1)$.
\end{lem}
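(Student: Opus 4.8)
The plan is to read off both identities from the multiplication formula \eqref{rechenregel}, using that $z := (\lambda_{N,m})^{1/2}$ is a zero of $\sinp$. Since $m \in \mathbb{N}$, we have $\lambda_{N,m} > 0$, hence $z > 0$; and since $\lambda_{N,m}$ is a Neumann eigenvalue, $z$ is a positive zero of $\sinp$, so $\spz(1) = \sinp(z) = 0$. Dividing by $z \neq 0$, in particular $\sum_{k=0}^\infty (-1)^k z^{2k} p_{2k+1} = \tfrac{1}{z}\spz(1) = 0$.

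For the first identity I would apply \eqref{rechenregel} to the sequences $a_{2j} := (-1)^j z^{2j} p_{2j}$ and $b_{2k} := (-1)^k z^{2k} p_{2k+1}$; for this fixed $z$ both are absolutely summable by Lemma \ref{absch} (its bounds at $x = 1$ give $|a_{2j}| \le z^{2j}(p_2(1))^j/j!$ and $|b_{2k}| \le z^{2k}(q_2(1))^k/k!$, which are summable in $j$ and $k$). On the one hand $\big(\sum_j a_{2j}\big)\big(\sum_k b_{2k}\big) = \cpz(1)\cdot\tfrac{1}{z}\spz(1) = 0$; on the other hand \eqref{rechenregel} rewrites this product as
\begin{align*}
\sum_{n=0}^\infty \sum_{k=0}^n a_{2k}\,b_{2n-2k} = \sum_{n=0}^\infty (-1)^n z^{2n} \sum_{k=0}^n p_{2k}\,p_{2n-2k+1}.
\end{align*}
Since $z^{2n} = (\lambda_{N,m})^n$, the right-hand side is exactly the first sum in the statement, and it equals $0$.

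For the second identity I would repeat the computation with the first factor reweighted by $2j$: set $\tilde a_{2j} := (-1)^j\,2j\,z^{2j} p_{2j}$ (one may recognise $\sum_j \tilde a_{2j}$ as $z\,\frac{d}{dz}\cpz(1)$, although this is not needed) and keep $b_{2k}$. The extra polynomial factor $2j$ does not affect absolute summability, again by Lemma \ref{absch}, so $\sum_j \tilde a_{2j}$ is a finite real number; multiplying it by $\sum_k b_{2k} = \tfrac{1}{z}\spz(1) = 0$ and expanding via \eqref{rechenregel} gives
\begin{align*}
0 = \sum_{n=0}^\infty \sum_{k=0}^n \tilde a_{2k}\,b_{2n-2k} = \sum_{n=0}^\infty (-1)^n z^{2n} \sum_{k=0}^n 2k\,p_{2k}\,p_{2n-2k+1},
\end{align*}
which, once more using $z^{2n} = (\lambda_{N,m})^n$, is the second claimed identity.

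I do not expect a genuine obstacle here; the argument is essentially bookkeeping. The only points worth a word are the absolute summability of the coefficient sequences (handed to us by Lemma \ref{absch}), the division by $z$, which is legitimate because $m \in \mathbb{N}$ forces $z = (\lambda_{N,m})^{1/2} \neq 0$, and the observation that \eqref{rechenregel} turns the vanishing product into precisely the series we must show is zero. The one mild idea is to insert the weight $2j$ into the first factor so as to generate the $2k$ in the second sum, rather than differentiating in $z$ (which would instead produce the weight $2n$).
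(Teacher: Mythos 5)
Your proof is correct and follows essentially the same route as the paper: both arguments Cauchy-multiply a cosp-type series against a sinp-type series via \eqref{rechenregel} and use $\sinp(\sqrt{\lambda_{N,m}})=0$, the only difference being that you shift one power of $z$ between the two factors (the paper literally multiplies $\cosp(z)\cdot\sinp(z)$ and $\cosp'(z)\cdot\sinp(z)$). Your explicit check of absolute summability via Lemma \ref{absch} is a detail the paper leaves implicit but which is indeed needed to invoke \eqref{rechenregel}.
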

\begin{proof}
For all $z \in \mathbb{R}$ follows with \eqref{rechenregel}
\begin{align*}
 \cosp(z) \cdot \sinp(z) &= \bigg( \sum_{j=0}^\infty(-1)^j\, z^{2j} \, p_{2j} \bigg) \cdot  \bigg( \sum_{k=0}^\infty (-1)^k \, z^{2k+1} \, p_{2k+1}\bigg )  \\
&= \sum_{n=0}^\infty (-1)^n\, z^{2n+1} \sum_{k=0}^n p_{2k}\, p_{2n-2k+1}.
\end{align*}
Let $m \in \mathbb{N}$, $z_m \coloneqq \sqrt{\lambda_{N,m}} \neq 0$. Then $\sinp(z_m) = 0$ and thus
\begin{align*}
\sum_{n=0}^\infty (-1)^n \lambda^n_{N,m} \sum_{k=0}^n p_{2k}\, p_{2n-2k+1} = \sum_{n=0}^\infty (-1)^n\, z_m^{2n} \sum_{k=0}^n p_{2k}\, p_{2n-2k+1} = 0.
\end{align*}
Analogously we get
\begin{align*}
 \cosp ' (z) \cdot \sinp (z) &=  \bigg( \sum_{j=0}^\infty (-1)^j\hspace{1mm} 2j\hspace{1mm} z^{2j-1} \hspace{1mm} p_{2j}\bigg) \cdot  \bigg( \sum_{k=0}^\infty (-1)^k\, z^{2k+1}\, p_{2k+1}\bigg ) \\
&= \sum_{n=0}^\infty (-1)^n\, z^{2n} \sum_{k=0}^n 2k \hspace{1mm} p_{2k}\, p_{2n-2k+1}
\end{align*}
and thus
\begin{align*}
\sum_{n=0}^\infty (-1)^n \lambda^n_{N,m} \sum_{k=0}^n 2k \hspace{1mm}p_{2k}\, p_{2n-2k+1} = \sum_{n=0}^\infty (-1)^n\, z_m^{2n} \sum_{k=0}^n 2k \hspace{1mm} p_{2k}\, p_{2n-2k+1} = 0.
\end{align*}
\end{proof}
\begin{lem} \label{ungleichnull}
For all $m \in \mathbb{N}$ holds
\begin{align*}
\parallel f_{N,m}\parallel_{L_2([0,1],\mu)}^2 = \frac{1}{2} \cosp (\sqrt{\lambda_{N,m}}) \cdot \sinp '(\sqrt{\lambda_{N,m}}).
\end{align*}
\end{lem}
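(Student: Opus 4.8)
The plan is to evaluate $\|f_{N,m}\|_{L_2([0,1],\mu)}^2 = \int_0^1 \big(\cp_{z_m}(x)\big)^2\,d\mu(x)$, with $z_m := (\lambda_{N,m})^{1/2}$, directly from the power series of $\cp_{z_m}$, reducing everything to the two identities of Lemma \ref{nullsummen}. A shorter, more conceptual route is also available: apply a measure-theoretic Green/Wronskian identity to $\cp_z$ and $\cp_w$, integrating $\cp_w\,\tfrac{d}{d\mu}\tfrac{d}{dx}\cp_z - \cp_z\,\tfrac{d}{d\mu}\tfrac{d}{dx}\cp_w$ by parts to obtain $w\cosp(z)\sinp(w) - z\cosp(w)\sinp(z) = (w^2-z^2)\int_0^1 \cp_z\cp_w\,d\mu$, and then letting $w\to z_m$ using $\sinp(z_m)=0$; but the series approach uses only machinery already set up in the paper.

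\textbf{Step 1.} Using the multiplication formula \eqref{rechenregel} with $a_{2j}=b_{2j}=(-1)^j z^{2j}p_{2j}(x)$,
\[
\big(\cp_z(x)\big)^2 = \sum_{n=0}^\infty (-1)^n z^{2n} \sum_{k=0}^n p_{2k}(x)\,p_{2n-2k}(x).
\]
By Lemma \ref{absch} the double series is, uniformly in $x\in[0,1]$, dominated by $\sum_n \tfrac{(2z^2 p_2(1))^n}{n!}<\infty$, so it may be integrated against $\mu$ term by term. \textbf{Step 2.} Next compute $I_{k,\ell}:=\int_0^1 p_{2k}(x)p_{2\ell}(x)\,d\mu(x)$ by integration by parts, using $dp_{2j+1}(x)=p_{2j}(x)\,d\mu(x)$, $dp_{2j}(x)=p_{2j-1}(x)\,dx$ and $p_j(0)=0$ for $j\ge 1$. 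Two integrations by parts give, with $p_j:=p_j(1)$,
\[
I_{k,\ell} = p_{2\ell}\,p_{2k+1} - p_{2\ell-1}\,p_{2k+2} + I_{k+1,\ell-1}\qquad (\ell\ge 1),
\]
and iterating down to $I_{k+\ell,0}=p_{2(k+\ell)+1}$ gives a closed form for $I_{k,\ell}$.

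\textbf{Step 3.} Insert this into $\sum_{k=0}^n I_{k,n-k}$, interchange the order of summation in the resulting double sum and reindex by $i=k+j$; a symmetry substitution $i\mapsto n-1-i$ in one of the two pieces collapses everything to
\[
\sum_{k=0}^n I_{k,n-k} = \sum_{k=0}^n (n+1-2k)\,p_{2k}\,p_{2n-2k+1}.
\]
\textbf{Step 4.} Write $n+1-2k=(n+1)-2k$ and split accordingly. At $z=z_m$ the contribution of the $2k$–part, $\sum_{n=0}^\infty (-1)^n z_m^{2n}\sum_{k=0}^n 2k\,p_{2k}\,p_{2n-2k+1}$, vanishes by the second identity of Lemma \ref{nullsummen}. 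For the $(n+1)$–part put $c_n:=\sum_{k=0}^n p_{2k}p_{2n-2k+1}$; from the computation in the proof of Lemma \ref{nullsummen} one has $\sum_n(-1)^n z^{2n}c_n = z^{-1}\cosp(z)\sinp(z)$, hence with $t=z^2$
\[
\sum_{n=0}^\infty (-1)^n(n+1)z^{2n}c_n = \frac{d}{dt}\Big(t\cdot z^{-1}\cosp(z)\sinp(z)\Big) = \frac{1}{2z}\,\frac{d}{dz}\big(z\cosp(z)\sinp(z)\big).
\]
Evaluating at $z=z_m$, where $\sinp(z_m)=0$, only the summand $z_m\cosp(z_m)\sinp'(z_m)$ survives, so this equals $\tfrac12\cosp(z_m)\sinp'(z_m)$. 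Combining the two parts gives $\|f_{N,m}\|_{L_2([0,1],\mu)}^2 = \tfrac12\cosp(\sqrt{\lambda_{N,m}})\sinp'(\sqrt{\lambda_{N,m}})$.

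The main obstacle is the bookkeeping in Steps 2 and 3: carrying out the iterated integration by parts with the correct boundary terms and index parities, and then reorganising the double sum so that it telescopes into the single sum with weights $n+1-2k$. Once that identity is in place, the conclusion follows from Lemma \ref{nullsummen} together with the one-line differentiation in Step 4.
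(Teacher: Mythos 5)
Your proof is correct, and it reaches the same intermediate waypoint as the paper — the series identity $\parallel f_{N,m}\parallel_{L_2([0,1],\mu)}^2 = \sum_{n=0}^\infty (-1)^n \lambda_{N,m}^n \sum_{k=0}^n (n+1-2k)\, p_{2k}\, p_{2n-2k+1}$ — but gets there and away from there differently. The paper simply cites this identity from Arzt (Corollary 4.3), whereas your Steps 1--3 re-derive it from scratch by squaring the Cauchy product and iterating the integration by parts $I_{k,\ell} = p_{2\ell}p_{2k+1} - p_{2\ell-1}p_{2k+2} + I_{k+1,\ell-1}$; I checked your recursion and the resulting closed form against small cases ($n=1,2$) and it matches, so this part is sound, though it is substantially more bookkeeping than the citation the paper uses. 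The final step also differs: the paper splits $n+1-2k = n + 1 - 2k$, kills the $1$- and $2k$-parts with both identities of Lemma \ref{nullsummen}, and then identifies $\sum_n (-1)^n z_m^{2n} \sum_k n\, p_{2k}p_{2n-2k+1}$ with $\tfrac12 \cosp(z_m)\sinp'(z_m)$ by computing the Cauchy product of $\cosp(z_m)$ with $\sinp'(z_m)$ and invoking Lemma \ref{nullsummen} once more to replace $2n-2k$ by $2n$. You instead split off only the $2k$-part (using just the second identity of Lemma \ref{nullsummen}) and evaluate the $(n+1)$-part by differentiating the generating function $z\cosp(z)\sinp(z)$ in $t=z^2$; since $\sinp(z_m)=0$, only $\tfrac12\cosp(z_m)\sinp'(z_m)$ survives. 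That differentiation trick is legitimate (the coefficients $c_n$ decay factorially, so term-by-term differentiation of the entire function is justified, and $z_m\neq 0$ for $m\ge 1$), and it is arguably cleaner than the paper's second Cauchy-product computation; what it costs you is the self-contained re-derivation of Arzt's formula, which is where all the real labor in your write-up sits and where the only loosely argued step (the reindexing "collapse" in Step 3) lives — that step should be written out if this were to be used as a complete proof.
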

\begin{proof}
Let $m \in \mathbb{N}$ and $z_m \coloneqq \sqrt{\lambda_{N,m}} \neq 0$. Then 
\begin{align*}
\sinp'(z_m) = \sum_{k=0}^\infty (-1)^k \ 2k \ z_m^{2k} \ p_{2k+1}
\end{align*}
and hence
\begin{align*}
\cosp (z_m) \cdot \sinp' (z_m) &= \bigg( \sum_{j=0}^\infty (-1)^j \, z_m^{2j} \hspace{1mm} p_{2j}\bigg) \cdot  \bigg( \sum_{k=0}^\infty (-1)^k \ 2k  \hspace{1mm} z_m^{2k} \ p_{2k+1}\bigg ) \\
&= \sum_{n=0}^\infty (-1)^n \, z_m^{2n} \sum_{k=0}^n (2n-2k)\, p_{2k}\, p_{2n-2k+1} \\
&= \sum_{n=0}^\infty (-1)^n \, z_m^{2n} \sum_{k=0}^n 2n\, p_{2k}\, p_{2n-2k+1},
\end{align*}
whereby the last equality follows from Lemma \ref{nullsummen}. In \cite{Arz15} Corollary 4.3 the formula 
\begin{align*}
\parallel f_{N,m}\parallel_{L_2([0,1],\mu)}^2 = \sum_{n=0}^\infty (-1)^n \lambda^n_{N,m} \sum_{k=0}^n (n+1-2k)\, p_{2k}\, p_{2n-2k+1}
\end{align*}
is shown. Together with Lemma \ref{nullsummen} we get
\begin{align*}
\parallel f_{N,m}\parallel_{L_2([0,1],\mu)}^2 = \sum_{n=0}^\infty (-1)^n \lambda^n_{N,m} \sum_{k=0}^n n\, p_{2k}\, p_{2n-2k+1}.
\end{align*}
Thus the statement follows.
\end{proof}
\begin{prop}
Let $z \in (0, \infty).$ If $z$ is a zero of $\sinp$, then $z$ is no local extremum of $\sinp$.
\end{prop}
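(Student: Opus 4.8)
The plan is to prove the slightly stronger assertion that every zero $z\in(0,\infty)$ of $\sinp$ is a \emph{simple} zero, i.e.\ $\sinp'(z)\neq 0$. Since $\sinp$ is differentiable on $\mathbb{R}$ — by Lemma \ref{absch} both its defining series and the formally differentiated series converge locally uniformly — a point at which the derivative does not vanish cannot be a local extremum, so this suffices.

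First I would use part (i) of the Theorem: the positive Neumann eigenvalues are precisely the squares of the positive zeros of $\sinp$. Hence, given a zero $z\in(0,\infty)$ of $\sinp$, there is an $m\in\mathbb{N}$ with $z=\sqrt{\lambda_{N,m}}$. Then I would apply Lemma \ref{ungleichnull}, which states
\[
\parallel f_{N,m}\parallel_{L_2([0,1],\mu)}^2=\tfrac12\,\cosp(\sqrt{\lambda_{N,m}})\cdot\sinp'(\sqrt{\lambda_{N,m}}).
\]
The function $f_{N,m}=\cp_{\sqrt{\lambda_{N,m}}}$ is, up to a nonzero constant, the Neumann eigenfunction associated with $\lambda_{N,m}$, and an eigenfunction is by definition a nonzero element of $L_2([0,1],\mu)$; therefore the left-hand side is strictly positive. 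Consequently $\cosp(z)\cdot\sinp'(z)\neq 0$, and in particular $\sinp'(z)\neq 0$.

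To conclude, since $\sinp$ is differentiable at $z$ with $\sinp'(z)\neq 0$, the difference-quotient argument shows that $\sinp(y)-\sinp(z)$ changes sign as $y$ passes through $z$; hence $z$ is neither a local maximum nor a local minimum of $\sinp$, which is the claim. The only delicate point — the ``main obstacle'', though a minor one — is the strict positivity of $\parallel f_{N,m}\parallel_{L_2([0,1],\mu)}^2$: since $\cp_{\sqrt{\lambda_{N,m}}}$ is merely continuous on $[0,1]$, one must argue that it is genuinely nonzero in $L_2([0,1],\mu)$, which is exactly the content of its being an eigenfunction, so this should be recorded explicitly rather than left implicit.
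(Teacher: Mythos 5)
Your proposal is correct and follows essentially the same route as the paper: both reduce the claim to showing $\sinp'(z)\neq 0$ via Lemma \ref{ungleichnull} together with the nonvanishing of $\parallel f_{N,m}\parallel_{L_2([0,1],\mu)}$, the paper phrasing it as a contraposition and you phrasing it directly. Your explicit remarks on why the eigenfunction has nonzero $L_2$-norm and why a nonzero derivative excludes a local extremum are just the details the paper leaves implicit.
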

\begin{proof}
If $z \in (0,\infty)$ is a local extremum of $\sinp$, then $\sinp'(z) = 0$. Because $\parallel f_{N,m}\parallel_{L_2(\mu)} \neq 0$, the statement follows with Lemma \ref{ungleichnull}.
\end{proof}
Analogously we get the following proposition.
\begin{prop}
Let $z \in (0, \infty).$ If $z$ is a zero of $\sinq$, then $z$ is no local extremum of $\sinq$.
\end{prop}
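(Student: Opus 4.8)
The plan is to repeat the argument of the preceding proposition verbatim, with $\sinp,\cosp,f_{N,m},\lambda_{N,m}$ replaced by $\sinq,\cosq,f_{D,m},\lambda_{D,m}$ and with $p_n$ replaced by $q_n:=q_n(1)$. Concretely, once the identity
\[
\lVert f_{D,m}\rVert_{L_2([0,1],\mu)}^2=\tfrac12\,\cosq\bigl(\sqrt{\lambda_{D,m}}\bigr)\cdot\sinq'\bigl(\sqrt{\lambda_{D,m}}\bigr),\qquad m\in\mathbb{N},
\]
is available, the proposition follows exactly as before: if $z\in(0,\infty)$ is a zero of $\sinq$, then $z=\sqrt{\lambda_{D,m}}$ for some $m\in\mathbb{N}$ since the positive zeros of $\sinq$ are the square roots of the Dirichlet eigenvalues; the right-hand side above is nonzero because an eigenfunction does not vanish $\mu$-a.e.; hence $\sinq'(z)\neq 0$; and a function differentiable on $\mathbb{R}$ cannot have a local extremum at the interior point $z$ of $(0,\infty)$ where its derivative is nonzero, so $z$ is no local extremum.

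It therefore remains to produce the displayed identity, which is done as in Lemmas~\ref{nullsummen} and~\ref{ungleichnull}. First, applying the multiplication formula \eqref{rechenregel} to
\[
\cosq(z)\cdot\sinq(z)=\Bigl(\sum_{j=0}^\infty(-1)^jz^{2j}q_{2j}\Bigr)\cdot\Bigl(\sum_{k=0}^\infty(-1)^kz^{2k+1}q_{2k+1}\Bigr)=\sum_{n=0}^\infty(-1)^nz^{2n+1}\sum_{k=0}^nq_{2k}\,q_{2n-2k+1}
\]
and, in the same way, to $\cosq'(z)\cdot\sinq(z)$, then evaluating both at $z_m:=\sqrt{\lambda_{D,m}}\neq 0$ (where $\sinq(z_m)=0$), gives the $q$-analogue of Lemma~\ref{nullsummen}:
\[
\sum_{n=0}^\infty(-1)^n\lambda_{D,m}^n\sum_{k=0}^nq_{2k}\,q_{2n-2k+1}=0,\qquad \sum_{n=0}^\infty(-1)^n\lambda_{D,m}^n\sum_{k=0}^n2k\,q_{2k}\,q_{2n-2k+1}=0.
\]
Second, one invokes the Dirichlet counterpart of \cite[Corollary~4.3]{Arz15}, which should read $\lVert f_{D,m}\rVert_{L_2([0,1],\mu)}^2=\sum_{n}(-1)^n\lambda_{D,m}^n\sum_{k=0}^n(n+1-2k)\,q_{2k}q_{2n-2k+1}$; splitting $n+1-2k$ into the parts $n$, $1$ and $2k$ and discarding the two vanishing sums above reduces this to $\sum_n(-1)^n\lambda_{D,m}^n\sum_k n\,q_{2k}q_{2n-2k+1}$. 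On the other hand, since $\sinq(z_m)=0$ forces $\sinq'(z_m)=\sum_k(-1)^k2k\,z_m^{2k}q_{2k+1}$ (the term-by-term $z$-derivative, simplified via $z_m^{-1}\sinq(z_m)=0$), expanding $\cosq(z_m)\cdot\sinq'(z_m)$ by \eqref{rechenregel} gives $\sum_n(-1)^nz_m^{2n}\sum_k2(n-k)q_{2k}q_{2n-2k+1}$, and the second vanishing sum turns $2(n-k)$ into $2n$; comparing the two expressions yields the identity.

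The main obstacle is the Dirichlet analogue of \cite[Corollary~4.3]{Arz15}. I expect it to come out of the same Parseval/integration-by-parts computation that proves the Neumann formula, with the roles of the $dx$- and $d\mu$-integrations (equivalently, of the families $(p_n)$ and $(q_n)$) interchanged; the only delicate point is that the boundary terms must now be handled under Dirichlet conditions, but this is precisely where one uses $f_{D,m}=\sq_{\sqrt{\lambda_{D,m}}}$ with $\sq_{\sqrt{\lambda_{D,m}}}(0)=0$ and $\sq_{\sqrt{\lambda_{D,m}}}(1)=\sinq(\sqrt{\lambda_{D,m}})=0$. Everything else is a routine transcription of the proofs of Lemmas~\ref{nullsummen}, \ref{ungleichnull} and of the preceding proposition.
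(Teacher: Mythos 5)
Your proposal is correct and is exactly the route the paper intends: the paper's entire proof of this proposition is the remark that it follows ``analogously'' to the Neumann case, and you have simply carried out that transcription in detail (the $q$-analogues of Lemmas~\ref{nullsummen} and~\ref{ungleichnull}, the Dirichlet counterpart of \cite[Corollary~4.3]{Arz15}, and the conclusion via $\sinq'(\sqrt{\lambda_{D,m}})\neq 0$). No substantive difference from the paper's argument.
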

\section{Eigenvalue approximation}
The main results of this paper are included in this section. Therefore, let $\mu$ be a finite non-atomic Borel probability measure on $[0,1]$ with distribution function $F$. Further, let $(\mu_n)_n$ be a sequence of non-atomic Borel probability measures on $[0,1]$ with distribution functions $(F_n)_n$ such that $F_n$ converges uniformly to $F$.

Before stating the main results, we need some estimates to get the speed of convergence of the measure theoretic trigonometric functions. Therefore, we denote $p_n^\mu$ and $q_n^\mu$ by $p_n$ and $q_n$ respectively and $p_n^{\mu_{m}}$ and $q_n^{\mu_m}$ by $p_{n,m}$ and $q_{n,m}$ respectively.

\begin{lem} \label{pq absch}
For all $x \in [0,1]$ and all $m,n \in \mathbb{N}$ holds 
\begin{align*}
|q_{2n}(x)-q_{2n,m}(x)| &\le 2 \, \frac{\lVert F-F_m \rVert_\infty\, x^n}{ (n-1)!}  , ~~~~~~~ |p_{2n}(x)-p_{2n,m}(x)| \le   2 \, \frac{\lVert F-F_m \rVert_\infty\, x^n}{ (n-1)!},\\[5pt]
|q_{2n+1}(x)-q_{2n+1,m}(x)| &\le 2 \, \frac{\lVert F-F_m \rVert_\infty\, x^n}{ (n-1)!},~~ |p_{2n+1}(x)-p_{2n+1,m}(x)| \le  2 \, \frac{\lVert F-F_m \rVert_\infty\, x^n}{(n-1)!}. 
\end{align*}

\end{lem}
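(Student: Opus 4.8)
I would fix $m$ and prove all four estimates simultaneously by induction on $n$, writing $c \coloneqq \lVert F - F_m \rVert_\infty$, $G \coloneqq F - F_m$, $a_j(x) \coloneqq |p_j(x) - p_{j,m}(x)|$ and $b_j(x) \coloneqq |q_j(x) - q_{j,m}(x)|$. Two reductions do most of the work. First, an index obtained by a Lebesgue integration, $g_{j+1}(x) = \int_0^x g_j(t)\,dt$, obviously satisfies $|g_{j+1}(x) - g_{j+1,m}(x)| \le \int_0^x |g_j(t) - g_{j,m}(t)|\,dt$. Second, an index obtained by $\mu$-integration should be rewritten via Fubini and non-atomicity of $\mu$: since $p_{2n}(t) = \int_0^t p_{2n-1}(s)\,ds$, interchanging integrals gives, for $n \ge 1$,
\begin{align*}
p_{2n+1}(x) = \int_0^x p_{2n}(t)\,d\mu(t) = \int_0^x p_{2n-1}(s)\,(F(x) - F(s))\,ds,
\end{align*}
and likewise $q_{2n}(x) = \int_0^x q_{2n-2}(s)\,(F(x) - F(s))\,ds$. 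Subtracting the corresponding $\mu_m$-identities and inserting $F = F_m + G$ produces
\begin{align*}
p_{2n+1}(x) - p_{2n+1,m}(x) = \int_0^x (p_{2n-1} - p_{2n-1,m})(s)\,(F(x) - F(s))\,ds + \int_0^x p_{2n-1,m}(s)\,(G(x) - G(s))\,ds,
\end{align*}
and the same identity with $q_{2n}$, $q_{2n-2}$, $q_{2n-1,m}$ in place of $p_{2n+1}$, $p_{2n-1}$, $p_{2n,m}$.

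For $n \ge 2$ the two $\mu$-steps (producing $p_{2n+1}$ and $q_{2n}$) are handled by this decomposition using $0 \le F(x) - F(s) \le 1$ and $|G(x) - G(s)| \le 2c$ for $s \in [0,x]$: the second integral is $\le 2c \int_0^x p_{2n-1,m}(s)\,ds = 2c\,p_{2n,m}(x) \le 2c\,x^n/n!$ by Lemma~\ref{absch} applied to $\mu_m$ (using $p_{2,m}(x) \le x$), and the first integral is $\le \int_0^x a_{2n-1}(s)\,ds \le 2c\,x^n/\big(n\,(n-2)!\big)$ by the inductive hypothesis at level $n-1$; because $\frac{1}{n(n-2)!} + \frac{1}{n!} = \frac{1}{(n-1)!}$, the two contributions add up to exactly $2c\,x^n/(n-1)!$, and the $q$-step is identical (with $q_{2n-1,m}(x) \le x^n/n!$, again from Lemma~\ref{absch}). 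The two Lebesgue steps (producing $p_{2n}$ and $q_{2n+1}$) only sharpen the constant, since integrating the bound from the previous index over $[0,x] \subseteq [0,1]$ loses a factor $\le 1$. The base case $n = 1$ is direct: $a_1 = |G| \le c$, $b_1 \equiv 0$, $a_2(x) \le cx$, and $b_2(x) = \big|\int_0^x (G(x) - G(s))\,ds\big| \le 2cx$ (here the first integral of the decomposition vanishes, as $q_0 \equiv q_{0,m} \equiv 1$), whence $b_3(x) \le \int_0^x b_2(t)\,dt \le cx^2 \le 2cx$.

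The only genuinely delicate point is the base-level estimate for $p_3$, and it is precisely there that the constant $2$ is sharp: the crude bounds above would give merely $a_3(x) \le 3cx$. The remedy is that at this level $p_1 - p_{1,m} = G$ and $p_{1,m} = F_m$ hold literally, so the two integrals in the decomposition combine into
\begin{align*}
p_3(x) - p_{3,m}(x) = \int_0^x G(s)\,\big((F(x) - F(s)) - F_m(s)\big)\,ds + G(x)\,p_{2,m}(x);
\end{align*}
for $s \le x$ one has $F(x) - F(s) \in [0,1]$ and $F_m(s) \in [0,1]$, so the bracket lies in $[-1,1]$, giving $a_3(x) \le cx + c\,p_{2,m}(x) \le 2cx$. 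I expect this cancellation — together with the bookkeeping of checking that the $q$-side mirrors the $p$-side at each step — to be the only real obstacle; everything else follows from the two reductions above combined with Lemma~\ref{absch}.
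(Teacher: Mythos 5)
Your proof is correct and follows essentially the same route as the paper: Fubini to rewrite each $\mu$-integration step as $\int_0^x(\cdot)\,(F(x)-F(s))\,ds$, a two-term splitting of the difference, the bound $|G(x)-G(s)|\le 2\lVert F-F_m\rVert_\infty$, Lemma \ref{absch} applied to the $\mu_m$-iterates, and the same arithmetic $\tfrac{n-1}{n!}+\tfrac{1}{n!}=\tfrac{1}{(n-1)!}$ in the induction step. The one point where you genuinely go beyond the written argument is the base case for $p_3$: the paper disposes of $p_{2n+1}$ by saying the induction is the same as for $q_{2n}$, but the analogue of the $q_2$ base case does not transfer verbatim since $p_1=F\not\equiv 1$, and the naive splitting there only yields $3\lVert F-F_m\rVert_\infty\,x$; your cancellation $p_3(x)-p_{3,m}(x)=\int_0^x G(s)\bigl(F(x)-F(s)-F_m(s)\bigr)\,ds+G(x)\,p_{2,m}(x)$ is exactly what is needed to recover the constant $2$, so your write-up in fact closes a small gap that the paper's proof glosses over.
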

\begin{proof} 
First we prove the assertion for $q_{2n}$. Since
\begin{align}
|\mu[r,x]-\mu_m[r,x]| \le 2 \, \lVert F-F_m \rVert_\infty ~~~ r,x \in [0,1], \label{l}
\end{align}
we get for $n=1$
\begin{align*}
|q_{2}(x) - q_{2,m}(x) | &=  \bigg|\int_0^x \int_0^t  \, dr \, d \mu (t) - \int_0^x \int_0^t \, dr \, d \mu_m (t) \bigg| \\
&= \bigg|\int_0^x  \mu [r,x]   -   \mu_m[r,x] \, dr  \bigg|  \\
&\le \int_0^x  |\mu [r,x]  -   \mu_m[r,x]|\,  dr  \\
&\le 2 \, \lVert F-F_m \rVert_\infty x,
\end{align*}
Thereby the assertion holds for $n=1$. Assume the assertion holds for $n \in \mathbb{N}$. Then
\begin{align*}
|q_{2n+2}(x) - q_{2n+2,m}(x)| &= \bigg|\int_0^x \int_0^t q_{2n}(r)  \, dr \, d \mu (t) - \int_0^x \int_0^t q_{2n,m}(r) \, dr \, d \mu_m (t) \bigg| \\
&= \bigg|\int_0^x  q_{2n}(r) \, \mu [r,x] \, dr  - \int_0^x  q_{2n,m}(r) \, \mu_m[r,x] \, dr  \bigg| \\
&\le \bigg|\int_0^x  (q_{2n}(r) - q_{2n,m}(r)) \, \mu [r,x] \, dr \bigg| \\ & ~~~+ \bigg|\int_0^x  q_{2n,m}(r) \, (\mu [r,x] - \mu_m[r,x] ) \, dr \bigg| \\
&\le  \int_0^x  |(q_{2n}(r) - q_{2n,m}(r))| \, dr + \int_0^x  q_{2n,m}(r) \, \big|\mu [r,x] - \mu_m[r,x] \big|\, dr. 
\end{align*}  
Because \[q_{2,m}(r) = \int_0^r \int_0^t \, dy\, d\mu_m (t) = \int_0^r \mu_m[y,r]\, dy \le r\]
and Lemma \ref{absch}, it follows \[q_{2n,m}(r) \le \frac{1}{n!} \, (q_{2,m}(r))^n \le \frac{r^n}{n!}.\]
Together with the induction hypothesis and \eqref{l} we get
\begin{align*}
|q_{2n+2}(x) - q_{2n+2,m}(x)| &\le  \lVert F-F_m \rVert_\infty \, \frac{2}{(n-1)!}  \int_0^x r^n \, dr + 2\, \lVert F-F_m \rVert_\infty\, \frac{1}{n!}  \int_0^x r^n\, dr \\
&=\lVert F-F_m \rVert_\infty \, \frac{2\, (n+1) \,  x^{n+1}}{(n+1)!} \\
&= 2 \lVert F-F_m \rVert_\infty \, \frac{ x^{n+1}}{n!}.
\end{align*}
For $p_{2n+1}$ the induction is the same as for $q_{2n}$. Therefore the assertion holds for $p_{2n+1}$. Then for $p_{2n}$, $n \ge 2$ we get
\begin{align*}
|p_{2n}(x)-p_{2n,m}(x)| &\le \int_0^x|p_{2n-1}(t) - p_{2n-1,m}(t)|\, dt  \\
&\le 2 \, \frac{\lVert F-F_m \rVert_\infty}{(n-2)!} \, \int_0^x t^{n-1}\, dt \\ 
&\le 2 \, \frac{\lVert F-F_m \rVert_\infty\, x^n}{(n-1)!}.
\end{align*}
We get the assertion for $q_{2n+1}$ analogously. The proof for $n=1$ is similar to the proof of the induction basis of $q_{2n}$.
\end{proof}

\begin{prop}\label{conv trig}
For all $z \in \mathbb{R}$ holds
\begin{align*}
\parallel \cqz - \cqzm\parallel_\infty &\le   c(z) \, \lVert F-F_m \rVert_\infty, ~~~~~~~~~~~~~~~~~ \parallel\cpz - \cpzm\parallel_\infty \le  c(z) \, \lVert F-F_m \rVert_\infty, \\
\parallel\sqz - \sqzm\parallel_\infty &\le  c(z)\,\lVert F-F_m \rVert_\infty, ~~~~~~~~~~\,~~~~~~~  \parallel\spz - \spzm\parallel_\infty \le c(z)\,\lVert F-F_m \rVert_\infty,
\end{align*}
where $c(z) > 0$ only depends on $z$.
\end{prop}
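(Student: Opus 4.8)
The plan is to expand each of the four differences as a single power series in $z$ whose coefficients are the differences $p_{2n}-p_{2n,m}$, $p_{2n+1}-p_{2n+1,m}$, $q_{2n}-q_{2n,m}$, $q_{2n+1}-q_{2n+1,m}$, to estimate these coefficients by Lemma~\ref{pq absch}, and then to sum the resulting majorant, which will turn out to be a multiple of $e^{z^2}$. For $\cpz$, for instance, I would write
\[
\lVert \cpz - \cpzm \rVert_\infty \le \sum_{n=0}^\infty |z|^{2n}\, \sup_{x\in[0,1]}\bigl|p_{2n}(x)-p_{2n,m}(x)\bigr|,
\]
which is legitimate because all the series involved converge absolutely. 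The $n=0$ term vanishes since $p_0\equiv p_{0,m}\equiv 1$, and for $n\ge 1$ Lemma~\ref{pq absch} together with $x\le 1$ bounds the $n$-th summand by $2\,\lVert F-F_m\rVert_\infty\,|z|^{2n}/(n-1)!$; summing over $n\ge 1$ gives $2 z^2 e^{z^2}\lVert F-F_m\rVert_\infty$, so one may take $c(z)=2z^2 e^{z^2}$.

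The estimate for $\cqz$ is word for word the same with the $q$-bounds from Lemma~\ref{pq absch}. For $\sqz$ one observes in addition that its $n=0$ term is $z\,q_1(x)=zx$, which is independent of the measure and hence drops out of the difference; the tail is handled exactly as before and yields, say, $c(z)=2|z|^3 e^{z^2}$. The only term that needs separate attention is the $n=0$ term of $\spz$, namely $z\,p_1(x)=z\,\mu([0,x])$: its contribution to $\spz-\spzm$ is $|z|\,|F(x)-F_m(x)|\le |z|\,\lVert F-F_m\rVert_\infty$, to be added to the tail estimate $2|z|^3 e^{z^2}\lVert F-F_m\rVert_\infty$, giving $c(z)=|z|+2|z|^3 e^{z^2}$. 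Taking for $c(z)$ the maximum of these four constants (or simply $c(z):=|z|+2(1+z^2)e^{z^2}$) finishes the proof.

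The computation is entirely routine; the only point requiring care is that Lemma~\ref{pq absch} is stated for $n\in\mathbb{N}$, i.e. only for the indices $2n,2n+1$ with $n\ge 1$, so the lowest-order term of each series has to be inspected by hand. Precisely there, the fact that $p_1$ equals the distribution function itself (rather than an integrated-twice quantity) is what produces the clean dependence linear in $\lVert F-F_m\rVert_\infty$, and the extra factor $2$ coming from estimating increments $\mu[r,x]$ is absent. No genuine obstacle is anticipated.
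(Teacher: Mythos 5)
Your proof is correct and follows essentially the same route as the paper: expand each difference as a power series, bound the coefficient differences by Lemma~\ref{pq absch}, and sum the exponential majorant to obtain $c(z)$ of the form $2z^2 e^{z^2}$. In fact you are slightly more careful than the paper (which only writes out the $\cqz$ case), since you correctly note that the $n=0$ term of $\spz$, namely $z\,p_1(x)=z\,\mu([0,x])$, falls outside the range of Lemma~\ref{pq absch} and must be estimated directly by $|z|\,\lVert F-F_m\rVert_\infty$.
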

\begin{proof}
We show the assertion for $\cqz$ by applying Lemma \ref{pq absch}. Analogously we get the other assertions. For all $x \in [0,1]$ holds
\begin{align*}
|\cqz(x) - \cqzm(x)| &\le  \sum_{n=1}^\infty |q_{2n}(x)-q_{2n,m}(x)|\, z^{2n} \\
&\le  2 \,  \sum_{n=1}^\infty  \frac{\lVert F-F_m \rVert_\infty\, x^n}{(n-1)!}\, z^{2n} \\
&\le 2 \, z^2\, e^{z^2} \, \lVert F-F_m \rVert_\infty.
\end{align*}
\end{proof}
\begin{bemerkung}
Especially we get
\begin{align*}
|\cosq(z) - \cosqm(z)|&\le  c(z) \, \lVert F-F_m \rVert_\infty, ~~~~~ |\cosp(z) - \cospm(z)|\le   c(z) \, \lVert F-F_m \rVert_\infty, \\
|\sinq(z) - \sinqm(z)|&\le  c(z) \, \lVert F-F_m \rVert_\infty, ~~~~~~ |\sinp(z) - \sinpm(z)|\le   c(z) \, \lVert F-F_m \rVert_\infty. \\
\end{align*}
\end{bemerkung}
\begin{prop} \label{absch abl}
For all $z \in \mathbb{R}$ and all $m \in \mathbb{N}$ holds
\begin{align*}
|\sinq '(z)- \sinq_m ' (z)| &\le 2\, \lVert F-F_m \rVert_\infty \sum_{n=1}^\infty \frac{(2n+1)}{(n-1)!} \, z^{2n}, \\ ~~~ |\sinp '(z)- \sinp_m ' (z)| &\le 2 \lVert F-F_m \rVert_\infty \sum_{n=1}^\infty \frac{(2n+1)}{(n-1)!} \, z^{2n}.
\end{align*}
\end{prop}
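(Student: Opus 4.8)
The plan is to write $\sinq$, $\sinq_m$, $\sinp$, $\sinp_m$ as power series in $z$, differentiate term by term, and estimate the resulting coefficient differences with Lemma~\ref{pq absch}. By Definition~\ref{trig},
\[
\sinq(z) = \sqz(1) = \sum_{n=0}^\infty (-1)^n z^{2n+1} q_{2n+1}(1),
\]
and Lemma~\ref{absch} gives $q_{2n+1}(1) \le (p_2(1))^n/n! \le 1/n!$ (using $p_2(1)=\int_0^1 F(t)\,dt\le 1$), so the formally differentiated series $\sum_{n=0}^\infty (-1)^n (2n+1) z^{2n} q_{2n+1}(1)$ converges uniformly on every bounded interval. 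Hence term-by-term differentiation is legitimate and $\sinq'(z) = \sum_{n=0}^\infty (-1)^n (2n+1) z^{2n} q_{2n+1}(1)$, and likewise $\sinq_m'(z) = \sum_{n=0}^\infty (-1)^n (2n+1) z^{2n} q_{2n+1,m}(1)$.

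Subtracting and applying the triangle inequality,
\[
|\sinq'(z) - \sinq_m'(z)| \le \sum_{n=0}^\infty (2n+1)\, z^{2n}\, |q_{2n+1}(1) - q_{2n+1,m}(1)|.
\]
The $n=0$ term vanishes since $q_1(1) = \int_0^1 1\,dt = 1 = q_{1,m}(1)$. For $n \ge 1$, Lemma~\ref{pq absch} evaluated at $x=1$ gives $|q_{2n+1}(1) - q_{2n+1,m}(1)| \le 2\,\lVert F-F_m\rVert_\infty/(n-1)!$. Plugging this in yields
\[
|\sinq'(z) - \sinq_m'(z)| \le 2\,\lVert F-F_m\rVert_\infty \sum_{n=1}^\infty \frac{2n+1}{(n-1)!}\, z^{2n},
\]
which is the claimed bound (the right-hand sum being finite for every $z$, e.g. by comparison with $e^{z^2}$). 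For $\sinp$ the argument is verbatim the same with $q_{2n+1}$ replaced by $p_{2n+1}$, using the second half of Lemma~\ref{pq absch} and the fact that $p_1(1) = F(1) = 1 = p_{1,m}(1)$ so that again the $n=0$ term drops out.

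There is no real obstacle here: the only points needing attention are the justification of term-by-term differentiation (supplied by the uniform bound from Lemma~\ref{absch}) and the observation that the $n=0$ contribution cancels — which is exactly why the stated bound starts at $n=1$ and why Lemma~\ref{pq absch}, whose estimates carry $(n-1)!$ in the denominator, applies to every remaining term without difficulty.
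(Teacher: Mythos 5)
Your proof is correct and follows exactly the route the paper intends: the paper's own proof of Proposition \ref{absch abl} is the one-line remark that the estimates are consequences of Lemma \ref{pq absch}, and your write-up simply supplies the details (term-by-term differentiation justified by the uniform bound from Lemma \ref{absch}, cancellation of the $n=0$ term since $q_1(1)=1=q_{1,m}(1)$ and $p_1(1)=F(1)=1=F_m(1)$, and Lemma \ref{pq absch} at $x=1$ for $n\ge 1$). Nothing further is needed.
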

\begin{proof}
The estimates are consequences of Lemma \ref{pq absch}.
\end{proof}
\begin{prop} \label{hilfsprop thm}
$(\sinp_n)_{n \in \mathbb{N}_0}$, $(\sinq_n)_{n \in \mathbb{N}_0}$, $(\cosp_n)_{n \in \mathbb{N}_0}$, $(\cosq_n)_{n \in \mathbb{N}_0}$ and $(\sinp_n')_{n \in \mathbb{N}_0}$, $(\sinq_n')_{n \in \mathbb{N}_0}$, $(\cosp_n')_{n \in \mathbb{N}_0}$, $(\cosq_n')_{n \in \mathbb{N}_0}$ converge uniformly on bounded intervals to $\sinp$, $\sinq$, $\cosp$, $\cosq$ and $\sinp'$, $\sinq'$, $\cosp'$, $\cosq'$, respectively.
\end{prop}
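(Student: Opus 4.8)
The plan is to bootstrap from the pointwise-in-$z$ estimates already established — Proposition \ref{conv trig} for the functions, Proposition \ref{absch abl} for the derivatives of $\sinp$ and $\sinq$ — to estimates uniform in $z$ on bounded intervals. Every bounded interval lies in some $[-R,R]$ with $R>0$, so I would fix such an $R$ and work there, using the standing assumption $\lVert F-F_n\rVert_\infty\to0$.

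For the four functions, the first step is to notice, from the proof of Proposition \ref{conv trig}, that the constant $c(z)$ appearing there is of the form $P(|z|)\,e^{z^2}$ with $P$ a polynomial with nonnegative coefficients (for instance $2z^2e^{z^2}$ for $\cqz$), hence continuous and nondecreasing in $|z|$. Evaluating Proposition \ref{conv trig} at $x=1$ then gives, for $|z|\le R$,
\begin{align*}
|\sinp(z)-\sinp_n(z)|\le c(z)\,\lVert F-F_n\rVert_\infty\le c(R)\,\lVert F-F_n\rVert_\infty,
\end{align*}
and the same for $\sinq$, $\cosp$, $\cosq$; taking the supremum over $|z|\le R$ and letting $n\to\infty$ yields uniform convergence on $[-R,R]$.

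For $\sinp'$ and $\sinq'$ I would argue identically from Proposition \ref{absch abl}, the bounding series $\sum_{k\ge1}\frac{2k+1}{(k-1)!}z^{2k}$ being an entire function of $z$ and hence bounded on $[-R,R]$. For $\cosp'$ and $\cosq'$, which Proposition \ref{absch abl} does not cover, I would first differentiate the defining power series termwise — legitimate since, by Lemma \ref{absch}, $\cosp(z)=\sum_{k\ge0}(-1)^kz^{2k}p_{2k}(1)$ and $\cosp_n(z)=\sum_{k\ge0}(-1)^kz^{2k}p_{2k,n}(1)$ have infinite radius of convergence uniformly in $n$ — so that $\cosp'(z)=\sum_{k\ge1}(-1)^k2k\,z^{2k-1}p_{2k}(1)$, and likewise for $\cosp_n'$; then, repeating the estimate of Proposition \ref{conv trig} with the extra factor $2k$, Lemma \ref{pq absch} gives
\begin{align*}
|\cosp'(z)-\cosp_n'(z)|\le\sum_{k\ge1}2k\,|z|^{2k-1}\,|p_{2k}(1)-p_{2k,n}(1)|\le 4\,\lVert F-F_n\rVert_\infty\sum_{k\ge1}\frac{k\,|z|^{2k-1}}{(k-1)!},
\end{align*}
and the last series is again entire in $z$, hence bounded on $[-R,R]$; the same computation handles $\cosq'$.

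I do not anticipate a real obstacle: the entire content is that the $z$-dependent constants of Propositions \ref{conv trig} and \ref{absch abl} are uniform on bounded sets, which is transparent from their explicit power-series form. The two points deserving a line of justification are the uniform-in-$n$ radius of convergence used to differentiate $\cosp_n$ and $\cosq_n$ termwise, and the bookkeeping of the extra factor $2k$ in the derivative estimate — both immediate from Lemmas \ref{absch} and \ref{pq absch}.
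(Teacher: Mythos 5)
Your proof is correct and follows the same route as the paper, whose own proof is just the one-line remark that the statement follows from Proposition \ref{conv trig}, its proof, and Proposition \ref{absch abl}, together with an analogous estimate for $\cosp'$ and $\cosq'$. You have simply made explicit the points the paper leaves implicit: that the constants $c(z)$ are entire in $z$ and hence bounded on bounded intervals, and that the derivative estimate for $\cosp'$, $\cosq'$ follows from Lemma \ref{pq absch} exactly as in Proposition \ref{absch abl}.
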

\begin{proof}
The statement follows from Proposition \ref{conv trig}, its proof and Proposition \ref{absch abl}, whereby an analogous statement to Proposition \ref{absch abl} holds for $\cosp'$ and $\cosq'$. 
\end{proof}
\begin{lem} \label{hilfslem thm}
Let $f: \mathbb{R} \rightarrow \mathbb{R}$ be continuously differentiable and $(f_n)_{n \in \mathbb{N}}$ be a sequence of continuously differentiable functions on $\mathbb{R}$ s.t. $f_n \rightarrow f$ and $f_n' \rightarrow f'$ 
uniformly on bounded intervals. If $f$ has exactly one zero $x \in (a,b)$ in $[a,b]$, $- \infty < a < b < \infty$ and if $f' \neq 0$ on $[a,b]$, then $f_n$ has exactly one zero in $[a,b]$ for all $n \ge n_0$.
\end{lem}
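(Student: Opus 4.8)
The plan is to use the two hypotheses on $f$ for two different purposes: the condition $f' \neq 0$ on $[a,b]$ will deliver strict monotonicity and hence \emph{uniqueness} of a zero, while the fact that the unique zero $x$ lies in the \emph{open} interval $(a,b)$ will force a sign change of $f$ across the endpoints and hence \emph{existence} of a zero of $f_n$ via the intermediate value theorem. Both of these features are stable under the assumed uniform convergence on the compact interval $[a,b]$, and nothing happens outside $[a,b]$, so all estimates are taken over that fixed compact set.

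First I would note that $f'$ is continuous on the connected set $[a,b]$ and never vanishes there, so it has constant sign; without loss of generality $f' > 0$ on $[a,b]$, and $c \coloneqq \min_{t \in [a,b]} f'(t) > 0$ since $f'$ is continuous on a compact set. Then $f$ is strictly increasing on $[a,b]$, and because its only zero $x$ lies in $(a,b)$ we must have $f(a) < 0 < f(b)$. Put $\delta \coloneqq \tfrac12 \min\{-f(a),\, f(b)\} > 0$.

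Next I would invoke the uniform convergence twice. Choose $n_1$ with $\sup_{t \in [a,b]} |f_n'(t) - f'(t)| < c/2$ for all $n \geq n_1$; then $f_n' > c/2 > 0$ on $[a,b]$, so each such $f_n$ is strictly increasing on $[a,b]$ and therefore has \emph{at most} one zero there. Choose $n_2$ with $\sup_{t \in [a,b]} |f_n(t) - f(t)| < \delta$ for all $n \geq n_2$; then $f_n(a) < f(a) + \delta \leq f(a)/2 < 0$ and $f_n(b) > f(b) - \delta \geq f(b)/2 > 0$, so by continuity of $f_n$ and the intermediate value theorem $f_n$ has \emph{at least} one zero in $(a,b) \subseteq [a,b]$. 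Setting $n_0 \coloneqq \max\{n_1, n_2\}$, for every $n \geq n_0$ the function $f_n$ has exactly one zero in $[a,b]$, which is the claim.

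There is no serious obstacle here; the only points requiring a little care are justifying that $f'$ has constant sign on $[a,b]$ (connectedness plus the intermediate value theorem for $f'$) and keeping the endpoint inequalities \emph{strict} so that the intermediate value theorem genuinely applies to $f_n$ — this is exactly what the factor $\tfrac12$ in the definition of $\delta$ arranges.
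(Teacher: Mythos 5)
Your proof is correct, and it takes a genuinely different route from the paper's. You exploit the hypothesis $f' \neq 0$ on all of $[a,b]$ directly: since $f'$ is continuous and nonvanishing on the connected set $[a,b]$ it has constant sign, the uniform convergence $f_n' \rightarrow f'$ transfers a strict lower bound $c/2$ on $|f_n'|$ to the approximants, and monotonicity of $f_n$ immediately gives \emph{at most} one zero, while the sign change $f(a)<0<f(b)$ (forced by $x \in (a,b)$ and strict monotonicity of $f$) persists under uniform convergence and gives \emph{at least} one zero by the intermediate value theorem. The paper instead proves existence by the same sign-change observation (stated more tersely) but establishes uniqueness by contradiction: assuming infinitely many $f_{n_k}$ have two zeros $x_{n_k} \neq y_{n_k}$, it shows via the mean value theorem that both sequences converge to $x$, extracts a point $\xi_k$ between them with $f_{n_k}'(\xi_k)=0$ (essentially Rolle's theorem), and passes to the limit to contradict $f'(x)\neq 0$. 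Your argument is shorter, avoids subsequences entirely, and yields an explicit, quantitative choice of $n_0$ in terms of $c = \min_{[a,b]}|f'|$ and $\min\{-f(a),f(b)\}$; the paper's argument has the mild advantage that its uniqueness step only really uses nonvanishing of $f'$ near the zero $x$ rather than a uniform sign on all of $[a,b]$, but under the stated hypotheses this buys nothing. Both proofs are complete; yours is the cleaner one here.
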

\begin{proof}
Let $x \in (a,b)$ be the unique zero of $f$ in $[a,b]$. Because $f_n \rightarrow f$ uniformly on $[a,b]$ and by assumption $f'(x) \neq 0$ we have at least one zero $x_n \in (a,b)$ of $f_n$ for each $n \ge n_0$. Therefore it is sufficient to show that this zero is unique in $[a,b]$. Suppose, there are infinite many $n \in \mathbb{N}$ s.t. $f_n$ has at least two zeros in $[a,b]$, i.e. there exists a subsequence $(n_k)_{k \in \mathbb{N}}$ s.t. $x_{n_k},\, y_{n_k} \in [a,b]$ with $x_{n_k} \neq y_{n_k}$ and $f_{n_k}(x_{n_k}) = f_{n_k}(y_{n_k})=0$ for all $k \in \mathbb{N}$. W.l.o.g. let $x_{n_k} < y_{n_k}$ for all $k$. Because
\begin{align*}
|f(x_{n_k})| = |f(x_{n_k}) - f_{n_k} (x_{n_k})| \le \sup_{y \in [a,b]} |f(y) - f_{n_k}(y) | \stackrel{k \rightarrow \infty}{\longrightarrow} 0
\end{align*}
and the Taylor formula (together with the mean value theorem)
\begin{align*}
|f(x_{n_k})| &= |f(x) + f'(\zeta_k) \, (x_{n_k}- x)|\\& = |f'(\zeta_k) \, (x_{n_k}-x)| \\&\ge \min_{x \in [a,b]} |f'(x)| |  x-x_{n_k}|, ~~~~~~~~~ \zeta_k \in (x_{n_k},x)
\end{align*}
and $f'\big|_{[a,b]} \neq 0$, we get
\begin{align*}
|x_{n_k} - x| \le \frac{|f(x_{n_k})|}{\min\limits_{x \in [a,b]} |f'(x)|} \stackrel{k \rightarrow \infty}{\longrightarrow} 0.
\end{align*}
Analogously we get $\lvert~\cdot~ \rvert-\lim\limits_{k \rightarrow \infty} y_{n_k} = x$. Moreover, Taylor's formula implies that there exists $\xi_k \in (x_{n_k}, y_{n_k})$  s.t.
\begin{align*}
0 &= f_{n_k} (x_{n_k}) \\
&= f_{n_k}(y_{n_k}) + f_{n_k}'(\xi_k) \, (x_{n_k} - y_{n_k}) \\
&= f_{n_k}'(\xi_k) \, (x_{n_k} - y_{n_k})
\end{align*}
and therefore, because $x_{n_k} \neq y_{n_k}$, we get $f_{n_k}'(\xi_k) = 0$ for all $k \in \mathbb{N}$. Let $\epsilon >0$. Then, because $\xi_k \rightarrow x$, follows
\begin{align*}
|f'(x)| &= |f'(x) - f'_{n_k}(\xi_{k})| \\
&\le |f'(x) - f'(\xi_{k})| + |f'(\xi_k) - f_{n_k}'(\xi_k)| \\
&\le   |f'(x) - f'(\xi_{k})| + \sup_{y \in [a,b]}|f'(y) - f_{n_k}'(y)| \\
&\le \epsilon
\end{align*}
for $k \ge k_0$. Thereby the last estimate follows, because $f'$ is continuous and $(f'_n)_{n \in \mathbb{N}}$ converge to $f'$ uniformly on bounded intervals. Because $\epsilon > 0$ is arbitrary, it follows that $f'(x) = 0$. This is a contradiction to the assumption.
\end{proof}

\begin{thm} \label{zentral}
For all $m \in \mathbb{N}_0$ holds
\begin{align*}
|\lambda_{N,m} - \lambda_{N,m,n}|\le  c(m) \lVert F-F_n \rVert_\infty ~~~ \text{for all } n \ge n_0(m)
\end{align*}
and for all $m \in \mathbb{N}$ holds
\begin{align*}
|\lambda_{D,m}- \lambda_{D,m,n}|\le c(m)\lVert F-F_n \rVert_\infty~~~ \text{for all } n \ge n_0(m),
\end{align*}
where $c(m) > 0$ and $n_0(m) \in \mathbb{N}$ only depend on $m$.
\end{thm}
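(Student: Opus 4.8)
The plan is to follow each positive zero of the measure theoretic sine functions under the perturbation (replacing $\mu$ by $\mu_n$) and to convert the quantitative convergence $\sinp_n \to \sinp$, $\sinq_n \to \sinq$ into a quantitative convergence of their zeros, hence of the eigenvalues. I only treat the Neumann case; the Dirichlet case is identical with $\sinp, \sinp_n$ replaced by $\sinq, \sinq_n$ and the corresponding proposition on the zeros of $\sinq$. The case $m=0$ is trivial since $\lambda_{N,0} = \lambda_{N,0,n} = 0$, so fix $m \ge 1$ and put $z_m := \sqrt{\lambda_{N,m}}$, the $m$-th positive zero of $\sinp$.

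First I would localise. The zeros of $\sinp$ correspond bijectively to the eigenvalues, which have no finite accumulation point, so they are isolated; and $z_m$ is not a local extremum of $\sinp$ by the proposition above, i.e.\ $\sinp'(z_m) \neq 0$. Using the continuity of $\sinp'$, choose $0 < a < z_m < b$ so that $z_m$ is the only zero of $\sinp$ in $[a,b]$ and $\sinp' \neq 0$ on all of $[a,b]$. By Proposition~\ref{hilfsprop thm}, $\sinp_n \to \sinp$ and $\sinp_n' \to \sinp'$ uniformly on $[a,b]$, so Lemma~\ref{hilfslem thm} yields $n_0(m) \in \mathbb{N}$ and, for each $n \ge n_0(m)$, a unique zero $z_{m,n} \in [a,b]$ of $\sinp_n$.

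Next I would verify that $z_{m,n}$ is the $m$-th positive zero of $\sinp_n$, i.e.\ $z_{m,n}^2 = \lambda_{N,m,n}$. Apply the previous step simultaneously to $z_1 < \dots < z_m$ with pairwise disjoint neighbourhoods $[a_j,b_j] \subset (0,\infty)$; on the compact complement of $\bigcup_j [a_j,b_j]$ inside $[\varepsilon, b_m]$ the function $\sinp$ is bounded away from $0$, hence so is $\sinp_n$ for $n$ large by uniform convergence, where $\varepsilon > 0$ is small enough that none of the $\sinp_n$ has a positive zero in $(0,\varepsilon)$ — this holds uniformly in $n$ because $\sinp_n(z)/z \to 1$ as $z \to 0$ with a rate controlled by Lemma~\ref{absch} uniformly in $n$. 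Thus for $n \ge n_0(m)$, after enlarging $n_0(m)$ if necessary, $\sinp_n$ has exactly $m$ positive zeros below $b_m$, one in each $[a_j,b_j]$, so $z_{m,n}$ is the $m$-th one.

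Finally I would extract the rate. Proposition~\ref{conv trig} gives a bound $|\sinp(z) - \sinp_n(z)| \le c(z)\lVert F - F_n\rVert_\infty$ with $c$ continuous and increasing in $z$, hence $\le C := c(b)$ on $[a,b]$; evaluating at $z = z_{m,n}$ and using $\sinp_n(z_{m,n}) = 0$,
\begin{align*}
|\sinp(z_{m,n})| = |\sinp(z_{m,n}) - \sinp_n(z_{m,n})| \le C\,\lVert F - F_n\rVert_\infty .
\end{align*}
By the mean value theorem $\sinp(z_{m,n}) = \sinp(z_{m,n}) - \sinp(z_m) = \sinp'(\zeta_n)(z_{m,n} - z_m)$ for some $\zeta_n \in [a,b]$, and with $\delta := \min_{z \in [a,b]} |\sinp'(z)| > 0$ this gives $|z_{m,n} - z_m| \le C\delta^{-1}\lVert F - F_n\rVert_\infty$. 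Therefore
\begin{align*}
|\lambda_{N,m} - \lambda_{N,m,n}| = (z_m + z_{m,n})\,|z_m - z_{m,n}| \le 2b\,C\delta^{-1}\,\lVert F - F_n\rVert_\infty =: c(m)\,\lVert F - F_n\rVert_\infty .
\end{align*}
The main obstacle is the middle step — certifying that the perturbed zero sits at the correct index — rather than the final estimate, which is the same mean value theorem computation already used inside Lemma~\ref{hilfslem thm}.
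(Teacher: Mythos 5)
Your proposal is correct and follows essentially the same route as the paper: localise each zero $z_j$ of $\sinp$ in an interval where $\sinp'\neq 0$ (using Lemma \ref{ungleichnull}), invoke Proposition \ref{hilfsprop thm} and Lemma \ref{hilfslem thm} to get a unique nearby zero of $\sinp_n$, certify its index by showing $\sinp_n$ has no spurious zeros between consecutive $z_j$ (the paper does this by induction on $m$, you do it by covering $[\varepsilon,b_m]$ at once, which is equivalent), and extract the rate from Proposition \ref{conv trig} via the mean value theorem. Your variant of the last step (applying the mean value theorem to $\sinp$ at $z_{m,n}$ rather than to $\sinp_n$ at $z_m$) even avoids the paper's extra lower bound $|\sinp_n'(\xi_n)|\ge \tfrac{1}{2}|\sinp'(z_m)|$, and you make explicit the final conversion $|\lambda_{N,m}-\lambda_{N,m,n}|=(z_m+z_{m,n})|z_m-z_{m,n}|$ that the paper leaves implicit.
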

\begin{proof}
We show the statement for $\lambda_{N,m}$, $m \in \mathbb{N}_0$. The proof for $\lambda_{D,m}$ works analogously. For $m=0$ the statement is obvious. Let $m \in \mathbb{N}$ and $z_m \coloneqq \sqrt{\lambda_{N,m}}$.
Applying Proposition \ref{ungleichnull}, we have $\sinp '(z_m) \neq 0$. Because the zeros of $\sinp$ are countable and have no finite accumulation points we get  
\begin{align}
0 \neq \sgn(\sinp(z_m-\epsilon))  \neq \sgn(\sinp(z_m+\epsilon)) \neq 0 \label{vorz}
\end{align}
for all $0 < \epsilon < \hat{\epsilon}$, $\hat{\epsilon}>0$ sufficiently small. 
Because $\sinp'$ is continuous and  $\sinp'(z_m) \neq 0$, there exists a sufficiently small $\epsilon$ neighbourhood of $z_m$ s.t. $\sinp'|_{[z_m-\epsilon,z_m + \epsilon]} \neq 0$. Thereby let $\epsilon$ be s.t. \eqref{vorz} holds and s.t. $z_m$ is the unique zero of $\sinp$ on $[z_m-\epsilon,z_m + \epsilon]$. 

Then Proposition \ref{hilfsprop thm} implies, that there exists a unique $z_{m,n} \in [z_m-\epsilon,z_m + \epsilon]$ for all $n \ge \hat{n}$ s.t. $\sinp_n(z_{m,n})=0$. Applying Taylor's formula, there exists a $\theta_n \in (z_{m,n},z_n)$ with
\[|\sinp(z_{m,n})| = |\sinp'(\theta_n) (z_{m,n}-z_n)| \ge \min_{y \in [z_m-\epsilon,z_m + \epsilon]} |\sinp'(y)|\, |z_{m,n}-z_m| \]
and therefore $z_{m,n} \longrightarrow z_m$ for $n \rightarrow \infty$, whereby $\sinp(z_{m,n}) \longrightarrow 0$ because 
\begin{align*}
|\sinp(z_{m,n})| &= |\sinp(z_{m,n}) - \sinp_n(z_{m,n})| \\
&\le \sup_{y \in [z_m-\epsilon,z_m + \epsilon]}|\sinp(y) - \sinp_n(y)| \stackrel{n \rightarrow \infty}{\longrightarrow} 0 
\end{align*}
holds. Let $\delta>0$ and $n \ge n_0$, $n_0$ sufficiently large, s.t.
\begin{align}
|z_{m}- z_{m,n}| < \delta. \label{delta}
\end{align}
To complete the proof, we first have to show inductively, that $z_{m,n}^2 = \lambda_{N,m,n}$ for all $n \ge \tilde{n}(m)$. Thereby the assertion is obvious for $m=0$. Assume, that the assertion holds for $m \in \mathbb{N}_0$.
Because of $\sinp'|_{[z_m-\epsilon,z_m + \epsilon]} \neq 0$, $\sinp \big|_{[z_m-\epsilon, z_m + \epsilon]\backslash \{z_m\}}\neq 0$ and Proposition \ref{hilfsprop thm}, we can apply Lemma \ref{hilfslem thm}. This implies, that just a finite number of $\sinp_n$ have more than one zero in $[z_m-\epsilon,z_m + \epsilon]$. Analogously we get $\sinp'|_{[z_{m+1}-\tilde{\epsilon},z_{m+1} + \tilde{\epsilon}]} \neq 0$ for a sufficiently small $\tilde{\epsilon} > 0$. This implies that also just a finite number of $\sinp_n$ have more than one zero in $(z_{m+1}-\tilde{\epsilon},z_{m+1} + \tilde{\epsilon})$. Applying the uniformly convergence on $[z_m+\epsilon, z_{m+1} - \tilde{\epsilon}]$ and $\sinp|_{[z_m+\epsilon, z_{m+1} - \tilde{\epsilon}]} \neq 0$, we get that just a finite number of $\sinp_n$ have a zero in $[z_m+\epsilon, z_{m+1} - \tilde{\epsilon}]$. Let $\tilde{n}(m+1)$ be minimal s.t. $z_{m,n}^2 = \lambda_{N,m,n}$, $\sinp_n$ has a unique zero in $[z_m-\epsilon,z_m + \epsilon]$, $\sinp_n$ has no zero in $[z_m+\epsilon, z_{m+1} - \tilde{\epsilon}]$ and $\sinp_n$ has a unique zero in $[z_{m+1}-\tilde{\epsilon},z_{m+1} + \tilde{\epsilon}]$ for all $n \ge \tilde{n}(m+1)$. Thereby the assertion follows.
\\\\
Moreover, let $n_0$ be s.t. $z_{m,n}^2 = \lambda_{N,m,n}$ for all $n \ge n_0$. With Taylor's formula there exists a $\xi_n$ between $z_{m}$ and $z_{m,n}$ s.t. 
\begin{align}
\sinp_n(z_m) &= \sinp_n (z_{m,n}) + \sinp'_n(\xi_n) \, ( z_m -z_{m,n}) \notag \\
&= \sinp'_n(\xi_n) \, (z_m-  z_{m,n}). \label{taylor}
\end{align} 
Let $\epsilon_1 > 0$, $\delta > 0$ sufficiently small s.t. \eqref{delta} implies
\begin{align}
|\sinp'(z_m) - \sinp'(\xi_n)| < \epsilon_1
\end{align}
for $n \ge n_0$. This is possible, because $\sinp'$ is continuous and
\begin{align*}
|z_m - \xi_n| \le |z_m - z_{m,n}| < \delta
\end{align*}
for $n \ge n_0$ holds. Thereby we get for $n \ge n_1$, $n_1 \in \mathbb{N}$ sufficiently large, \[| \sinp_n' (\xi_n)| \ge \frac{|\sinp'(z_m)|}{2} > 0.\] 
Together with \eqref{taylor} and Proposition \ref{conv trig}, we have 
\begin{align*}
\frac{|\sinp'(z_m)|}{2} \, |z_m - z_{m,n}| &\le | \sinp_n' (\xi_n)|\, |z_m - z_{m,n}| \\
&= |\sinp_n(z_m)| \\
&= |\sinp_n(z_m) - \sinp (z_m)| \\
&\le \lVert F-F_n \rVert_\infty \, \tilde{c}(z_m).
\end{align*}
\end{proof}
In the following, we denote the $m$-th Neumann and Dirichlet eigenfunction of $\frac{d}{d\mu_n}\frac{d}{d x}$ by $f_{N,m,n}$ and $f_{D,m,n}$, respectively.
\begin{thm} \label{zentral eigenfct}
For all $m \in \mathbb{N}_0$ holds
\begin{align*}
\lVert f_{N,m}-f_{N,m,n}\rVert_\infty \le  c(m) \lVert F-F_n \rVert_\infty ~~~ \text{for all } n \ge n_0(m)
\end{align*}
and for all $m \in \mathbb{N}$ holds
\begin{align*}
\lVert f_{N,m}-f_{N,m,n}\rVert_\infty\le c(m)\lVert F-F_n \rVert_\infty~~~ \text{for all } n \ge n_0(m),
\end{align*}
where $c(m) > 0$ and $n_0(m) \in \mathbb{N}$ only depend on $m$.
\end{thm}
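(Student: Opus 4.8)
We prove the estimate for the Neumann eigenfunctions; the Dirichlet case is identical with $\sq$ playing the role of $\cp$ (and needs no separate treatment of $m=0$, since $\lambda_{D,m}>0$ for all $m\ge 1$). We first commit to the scalar normalizations $f_{N,m}\coloneqq\cp_{z_m}$ and $f_{N,m,n}\coloneqq\cp^{\mu_n}_{z_{m,n}}$, where $z_m\coloneqq\sqrt{\lambda_{N,m}}$ and $z_{m,n}\coloneqq\sqrt{\lambda_{N,m,n}}$; recall from the proof of Theorem \ref{zentral} that $z_{m,n}^2=\lambda_{N,m,n}$ and $z_{m,n}\in[z_m-\epsilon,z_m+\epsilon]$ for all $n\ge n_0(m)$, and that $z_{m,n}\to z_m$. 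For $m=0$ we have $f_{N,0}\equiv f_{N,0,n}\equiv 1$, so assume $m\ge 1$, whence $z_m>0$.

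The plan is to insert the intermediate function $\cp_{z_{m,n}}$ (same argument as $f_{N,m,n}$, same measure as $f_{N,m}$) and split
\begin{align*}
\lVert f_{N,m}-f_{N,m,n}\rVert_\infty \;\le\; \lVert \cp_{z_m}-\cp_{z_{m,n}}\rVert_\infty \;+\; \lVert \cp_{z_{m,n}}-\cp^{\mu_n}_{z_{m,n}}\rVert_\infty
\end{align*}
into a term measuring a change of the argument at fixed measure $\mu$, and a term measuring a change of the measure at fixed argument. The second term is already under control: by the proof of Proposition \ref{conv trig} the constant there may be taken to be $2z^2e^{z^2}$, which is increasing in $|z|$, so uniformly for $z\in[z_m-\epsilon,z_m+\epsilon]$ we obtain $\lVert \cp_{z_{m,n}}-\cp^{\mu_n}_{z_{m,n}}\rVert_\infty\le c_1(m)\lVert F-F_n\rVert_\infty$.

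For the first term I would use that, for each fixed $x\in[0,1]$, the map $z\mapsto\cpz(x)=\sum_{n\ge 0}(-1)^n z^{2n}p_{2n}(x)$ is differentiable, the term-by-term differentiation being justified since Lemma \ref{absch} gives $p_{2n}(x)\le (p_2(x))^n/n!\le 1/n!$, so that $\bigl|\tfrac{\partial}{\partial z}\cpz(x)\bigr|\le\sum_{n\ge 1}2n|z|^{2n-1}/n!=2|z|e^{z^2}$ locally uniformly in $z$ and uniformly in $x$. The mean value theorem in $z$ then yields
\begin{align*}
\lVert \cp_{z_m}-\cp_{z_{m,n}}\rVert_\infty \;\le\; 2(z_m+\epsilon)e^{(z_m+\epsilon)^2}\,|z_m-z_{m,n}| ,
\end{align*}
and Theorem \ref{zentral} together with $z_{m,n}\ge z_m/2>0$ for $n\ge n_0(m)$ converts the eigenvalue estimate into
\begin{align*}
|z_m-z_{m,n}| \;=\; \frac{|\lambda_{N,m}-\lambda_{N,m,n}|}{z_m+z_{m,n}} \;\le\; \frac{c(m)}{z_m}\,\lVert F-F_n\rVert_\infty .
\end{align*}
Combining the three displays and renaming constants proves the claim.

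I do not expect a genuine obstacle: all analytic ingredients — the rate of measure convergence at fixed $z$ (Proposition \ref{conv trig}), the local Lipschitz dependence of $\cpz$ on $z$ via Lemma \ref{absch}, and the eigenvalue rate (Theorem \ref{zentral}) — are in place, and the argument is a triangle inequality plus bookkeeping of constants. The only points needing mild care are committing to the specific scalar representatives of the (a priori only up to a constant defined) eigenfunctions, and observing that both the constant of Proposition \ref{conv trig} and the bound on $\tfrac{\partial}{\partial z}\cpz$ are monotone in $|z|$, so they may be frozen on the neighbourhood $[z_m-\epsilon,z_m+\epsilon]$.
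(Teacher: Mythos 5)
Your proof is correct and follows essentially the same route as the paper: a triangle inequality through an intermediate function (you insert $\cp^{\mu}_{z_{m,n}}$, the paper inserts $\cp^{\mu_n}_{z_m}$ — the mirror-image choice), with Proposition \ref{conv trig} handling the change of measure and Theorem \ref{zentral} plus a Lipschitz-in-$z$ bound (you via the mean value theorem, the paper via the telescoping binomial identity and Lemma \ref{absch}) handling the change of argument. No gaps; your version even spells out the measure-change term that the paper leaves implicit.
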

\begin{proof}
It holds
\begin{align*}
|f_{N,m,n}(x) - \text{cp}_{\lambda^{1/2}_{N,m},n}(x)| \le \sum_{k=0}^\infty |\lambda_{N,m,n}^k - \lambda_{N,m}^k| p_{2k,n}(x).
\end{align*}
For $z, z' \in \mathbb{R}$ we have with a generalized binomial formula 
\begin{align*}
z^{k+1} - (z')^{k+1} = (z-z') \sum_{j=0}^k z^j (z')^{k-j}
\end{align*}
and thus if $|z - z'| \le 1$, we get
\begin{align*}
\left| z^{k+1} - (z')^{k+1} \right| \le (k+1)\left|z-z'\right|(z+1)^k.
\end{align*}
Since $\lambda_{N,m,n} \rightarrow \lambda_{N,m}$ by Theorem \ref{zentral}, we can choose $n_0 = n_0(m)$ large enough such that $|\lambda_{N,m,n} - \lambda_{N,m}|\le 1$ for $n \ge n_0$ and thus 
\begin{align*}
|f_{N,m,n}(x) - \text{cp}_{\lambda^{1/2}_{N,m},n}(x)| \le |\lambda_{N,m,n} - \lambda_{N,m}| \sum_{k=0}^\infty (k+1) (\lambda_{N,m} + 1)^k p_{2k,n}(x).
\end{align*}
By Lemma \ref{absch} the last sum is convergent and thus we can conclude the claim.
\end{proof}
\section{Eigenvalue approximation for Cantor Measures}
In this section we use the results of the previous section to give the speed of convergence of the eigenvalues and -functions of approximations of Cantor measures. Therefore, let $\mu^w$ be the unique invariant Borel probability measure on the unit interval induced by the IFS $\s = (S_1,S_2)$, $S_1(x) = \frac{1}{3}x$, $S_2(x) = \frac{1}{3}x + \frac{2}{3}$, $x \in [0,1]$ and weight vector $w = (w_1,w_2)$, $w_1 \in (0,1)$, $w_2 = 1-w_1.$ For reasons of simplicity we only consider the classical Cantor set, but the following concept can be modified to Cantor like sets.
W.l.o.g. let $w_1 \le w_2$. For $n \in \mathbb{N}$ we define $\mu_n^w: ([0,1],B[0,1]) \longrightarrow [0,1]$ by
\begin{align}
\mu_n^w(A) \coloneqq 3^n \! \! \! \sum_{x \in \{1,2\}^n} \lambda^1_{|_{I_x}}(A) \, \prod_{i=1}^n w_{x_i}, ~~~~ A \in B[0,1], \label{mass}
\end{align}
where $I_x \coloneqq (S_{x_1} \circ ... \circ S_{x_n})([0,1])$, $x \in \{1,2\}^n$ and $B[0,1]$ denotes the Borel $\sigma$-Algebra on $[0,1]$. Figure 1 shows how $\mu_n^w$ weights the intervals in the $n$-th approximation step $E_n$ of the Cantor set $F$. Remark, that the attractor of the given IFS is F. This implies $\supp \mu^w = F$. Then $\mu_n^w$, $n \in \mathbb{N}$ is a non-atomic Borel probability measure and the identity
\[\mu_n^w [0,y] = w_1 \, \mu_{n-1}^w [0,3y] + w_2 \, \mu_{n-1}^w [0,3y-2], ~~~~ y \in [0,1], \]
where $\mu_n^w[b,a] \coloneqq -\mu_n^w[a,b]$, $a < b$, $\mu_0^w \coloneqq \lambda^1_{|_{[0,1]}}$ holds. Furthermore, it is well known that $(\mu_n)^w$ converges weakly to $\mu^w$.
\begin{figure}[h]
\centering
\includegraphics[width=1.0\textwidth]{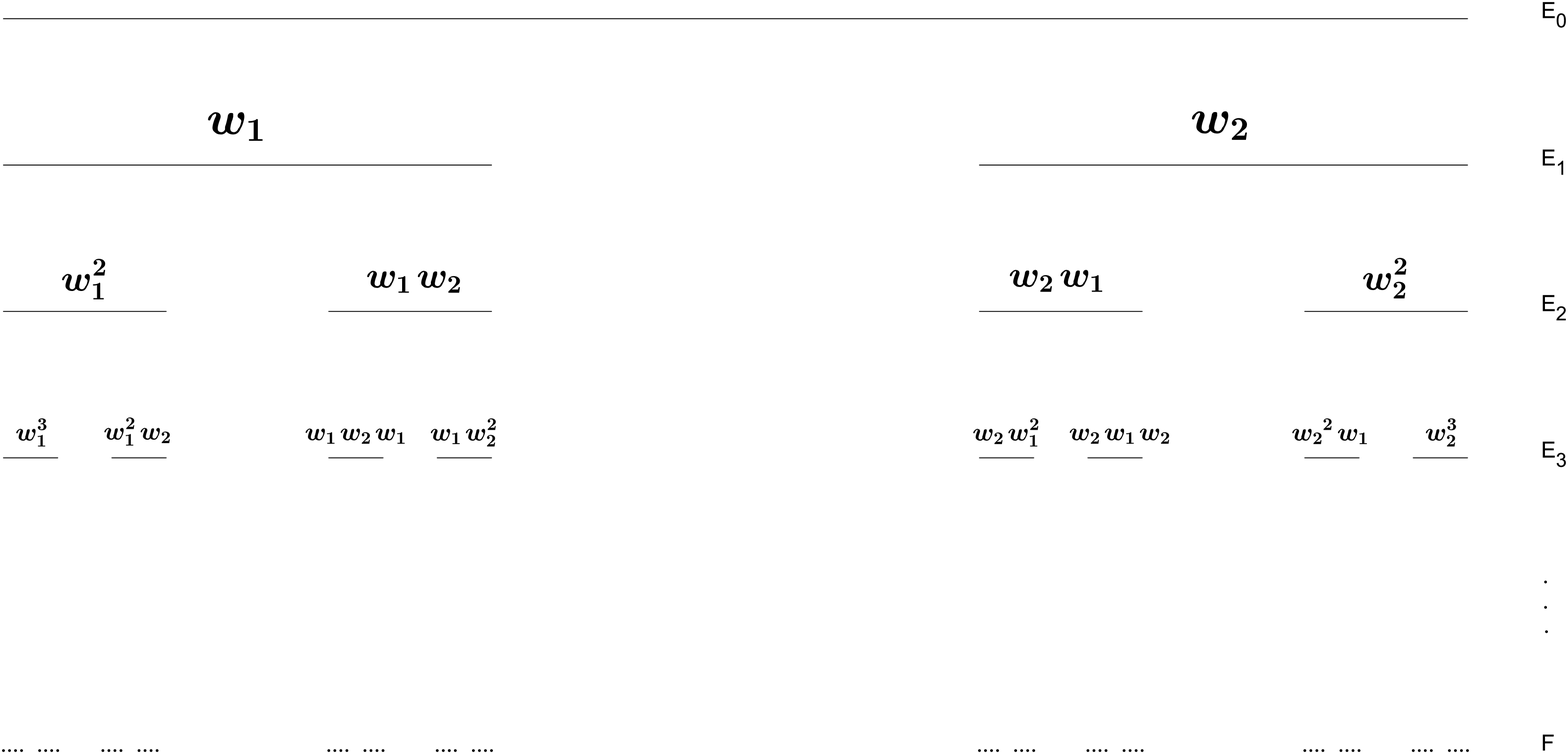}
\caption{weighted Cantor set}
	\label{fig1}\
\centering	\
\end{figure}

\begin{lem} \label{konv}
There exists a Borel probability measure $\mu$ on $[0,1]$ such that \[\parallel F_\mu - F_{\mu^w_n}\parallel_{\infty}\le  \frac{w_2^n}{w_1}, ~~~ \text{for all } n \in \mathbb{N},\]
where $F_\nu$ is the distribution function for given Borel measure $\nu$.
\end{lem}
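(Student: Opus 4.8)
The plan is to take $\mu$ to be the uniform limit of the distribution functions $F_{\mu^w_n}$ and to read off the geometric rate directly from the self-similarity identity stated just before the lemma. Write $G_n \coloneqq F_{\mu^w_n}$ and extend each $G_n$ to all of $\mathbb{R}$ by setting $G_n(y) = 0$ for $y < 0$ and $G_n(y) = 1$ for $y > 1$; with the convention $\mu^w_n[b,a] = -\mu^w_n[a,b]$ this extension is exactly what the displayed identity $\mu_n^w[0,y] = w_1 \mu_{n-1}^w[0,3y] + w_2 \mu_{n-1}^w[0,3y-2]$ produces, so that
\begin{align*}
G_0(y) = y, \qquad G_n(y) = w_1\, G_{n-1}(3y) + w_2\, G_{n-1}(3y-2), \qquad y \in [0,1],\ n \ge 1.
\end{align*}

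First I would run a case distinction over the three subintervals $[0,\tfrac13]$, $[\tfrac13,\tfrac23]$, $[\tfrac23,1]$ for $n \ge 2$. On $[0,\tfrac13]$ one has $3y \in [0,1]$ and $3y-2 < 0$, so $G_n(y) = w_1 G_{n-1}(3y)$ and, applying the recursion one level lower, $G_{n-1}(y) = w_1 G_{n-2}(3y)$; on $[\tfrac13,\tfrac23]$ one has $3y \ge 1$ and $3y-2 \le 0$, so $G_n(y) = w_1 = G_{n-1}(y)$; and on $[\tfrac23,1]$ one has $3y > 1$ and $3y-2 \in [0,1]$, so $G_n(y) = w_1 + w_2 G_{n-1}(3y-2)$ and $G_{n-1}(y) = w_1 + w_2 G_{n-2}(3y-2)$. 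Subtracting within each region and using the normalization $w_1 \le w_2$ then yields the contraction $\lVert G_n - G_{n-1}\rVert_\infty \le w_2 \lVert G_{n-1} - G_{n-2}\rVert_\infty$ for $n \ge 2$, hence
\begin{align*}
\lVert G_n - G_{n-1}\rVert_\infty \le w_2^{\,n-1}\,\lVert G_1 - G_0\rVert_\infty \le w_2^{\,n-1}, \qquad n \ge 1,
\end{align*}
the final inequality simply because $G_0$ and $G_1$ both take values in $[0,1]$.

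From here everything is routine. Summing the telescoping series gives, for every $n \in \mathbb{N}$ and every $m > n$,
\begin{align*}
\lVert G_m - G_n\rVert_\infty \le \sum_{k=n}^{m-1} w_2^{\,k} \le \frac{w_2^{\,n}}{1-w_2} = \frac{w_2^{\,n}}{w_1},
\end{align*}
so $(G_n)_n$ is uniformly Cauchy on $[0,1]$. Its uniform limit $G$ is continuous, non-decreasing and satisfies $G(0)=0$, $G(1)=1$, hence is the distribution function $F_\mu$ of a Borel probability measure $\mu$ on $[0,1]$; by uniqueness of weak limits $\mu = \mu^w$, although only existence is claimed. Letting $m \to \infty$ in the previous display gives $\lVert F_\mu - F_{\mu^w_n}\rVert_\infty \le w_2^{\,n}/w_1$ for all $n \in \mathbb{N}$.

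I do not expect a genuine obstacle here; the only point requiring care is the bookkeeping in the case distinction, namely that one must consistently use the extended functions $G_{n-1}$ (taking the values $0$ and $1$ outside $[0,1]$) so that the self-similarity relation is literally an identity of functions on $[0,1]$, and that the base case $n=1$, where $G_0(y)=y$ is not itself produced by the recursion, must be kept separate from the inductive step $n \ge 2$. Once the recursion is phrased this way, the contraction estimate and the geometric summation follow immediately.
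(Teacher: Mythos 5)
Your argument is correct and reaches the same bound, but the key step is obtained by a genuinely different route. The paper proves the single-step estimate $\lVert F_{\mu_n^w}-F_{\mu_{n+1}^w}\rVert_\infty\le w_2^n$ by brute force: it fixes a level-$n$ interval $I_x$, uses that the two distribution functions agree at $\partial I_x$ and are constant on the gaps, and then computes $|\mu_n^w[y,t]-\mu_{n+1}^w[y,t]|$ explicitly from the defining formula \eqref{mass} in three sub-cases ($t$ in the left child, in the middle gap, in the right child of $I_x$). You instead exploit the self-similarity identity $G_n(y)=w_1G_{n-1}(3y)+w_2G_{n-1}(3y-2)$ to get the sup-norm contraction $\lVert G_n-G_{n-1}\rVert_\infty\le w_2\lVert G_{n-1}-G_{n-2}\rVert_\infty$, which yields the same geometric rate after one trivial base-case bound. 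Your version is cleaner and less error-prone (no bookkeeping with $3^n(t-y)\prod w_{x_i}$), at the cost of having to justify the extension convention $G_{n-1}\equiv 0$ below $0$ and $\equiv 1$ above $1$ — which you do correctly, and which is exactly what the paper's convention $\mu_n^w[b,a]=-\mu_n^w[a,b]$ encodes; note also that the recursion itself is only asserted, not proved, in the paper, so both proofs lean on it to the same extent (yours more heavily). The telescoping sum, the Cauchy/completeness argument in $(C^0([0,1]),\lVert\cdot\rVert_\infty)$, and the identification of the limit as a distribution function are identical in both treatments, and your final constant $w_2^n/(1-w_2)=w_2^n/w_1$ matches the paper's.
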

\begin{proof}
First we show
\begin{align}
|F_{\mu_n^w}(t) - F_{\mu_{n+1}^w}(t)| \le w_2^n, ~~~~ t \in [0,1]. \label{abschae}
\end{align}
Therefore let $x \in \{1,2\}^n$, $n \in \mathbb{N}$ and $y \in \partial I_x$. We have by definition
\begin{align}
F_{\mu_n^w}(y) = F_{\mu_{n+1}^w}(y). \label{gleich}
\end{align}

Also $F_{\mu_n^w}$ and $F_{\mu_{n+1}^w}$ are constant and equal on $[0,1]\backslash E_n$. Therefore it is sufficient to show the statement on $E_n$. Let $y \in \partial I_x$ be the left boundary of $I_x$. Because of \eqref{gleich} we get
\begin{align*}
|F_{\mu_{n}^w}(t)-F_{\mu_{n+1}^w}(t)| &= |F_{\mu_{n}^w}(t)-F_{\mu_{n+1}^w}(t)-F_{\mu_{n}^w}(y)+F_{\mu_{n+1}^w}(y)| \\
&= |\mu_n^w[y,t]- \mu_{n+1}^w[y,t]|. 
\end{align*}  
Let $t \in I_{(x_1,...,x_n,1)}$. Then
\begin{align*}
|\mu_n^w[y,t]- \mu_{n+1}^w[y,t]| &= 3^n \, (t-y) \prod_{i=1}^n w_{x_i} \, |\, 1-3w_1\,|  \\ &\le \prod_{i=1}^n w_{x_i} \\ &\le w_2^n.
\end{align*}
If $t \in I_x \backslash (I_{(x_1,...,x_n,1)} \cup I_{(x_1,..,x_n,2)})$, then
\begin{align*}
|\mu_n^w[y,t]- \mu_{n+1}^w[y,t]| &= \bigg|\, 3^n \, (t-y) \prod_{i=1}^n w_{x_i} - \prod_{i=1}^{n+1}w_{x_i}\, \bigg| \\
&= \prod_{i=1}^n w_{x_i} \, |\, 3^n \, (t-y) - w_1\, | \\
&\le \prod_{i=1}^n w_{x_i} \\
&\le w_2^n,
\end{align*}
whereby $(t-y) \le \frac{2}{3^{n+1}}$ has been used. 
If $t \in I_{(x_1,...,x_n,2)}$, then $(t-y) \le \frac{1}{3^n}$ and $(t-z) \le \frac{1}{3^{n+1}}$, where $z \in \partial I_{(x_1,...,x_n,2)}$ is the left boundary of $I_{(x_1,...,x_n,2)}$. Hence we get
\begin{align*}
|\mu_n^w[y,t]- \mu_{n+1}^w[y,t]|  &= \bigg|\, 3^n \, (t-y) \prod_{i=1}^n w_{x_i} - w_1 \prod_{i=1}^n w_{x_i} - 3^{n+1} \, (t-z) \, w_2 \prod_{i=1}^n w_{x_i} \, \bigg| \\
&= \prod_{i=1}^n w_{x_i} \, \big|\, 3^n \, (t-y)  - w_1 - 3^{n+1} \, (t-z) \, w_2 \, \big| \\
&\le \prod_{i=1}^n w_{x_i} \\
&\le w_2^n.
\end{align*}
Since $x \in \{1,2\}^n$ is arbitrary, the statement follows on $E_n$ and therefore \eqref{abschae}. Thus
\begin{align*}
\big|\big| F_{\mu_{n}^w}-F_{\mu_{n+1}^w} \big|\big|_\infty = \sup_{x \in [0,1]} \big| F_{\mu_{n}^w}(x)-F_{\mu_{n+1}^w}(x) \big| \le w_2^n.
\end{align*}
For $k \in \mathbb{N}$ we get iteratively
\begin{align*}
\big|\big| F_{\mu_{n}^w}-F_{\mu_{n+k}^w} \big|\big|_\infty &\le \big|\big| F_{\mu_{n}^w}-F_{\mu_{n+k-1}^w} \big|\big|_\infty + \big|\big| F_{\mu_{n+k}^w}-F_{\mu_{n+k-1}^w} \big|\big|_\infty \\
&\le  \big|\big| F_{\mu_{n}^w}-F_{\mu_{n+k-2}^w} \big|\big|_\infty + \big|\big| F_{\mu_{n+k-1}^w}-F_{\mu_{n+k-2}^w} \big|\big|_\infty + w_2^{n+k-1} \\
&\le \sum_{j=n}^{n+k-1} w_2^j \\
&\le \sum_{j=n}^{\infty} w_2^j \\ 
&= \frac{1}{w_1} - \frac{1-w_2^{n}}{w_1} = \frac{1}{w_1} \, w_2^{n}.
\end{align*}
Hence $(F_{\mu_n^w})_{n \in \mathbb{N}}$ is a Cauchy sequence on the Banach Space $(C^0([0,1]),\parallel\cdot\parallel_\infty)$. Thus the limit $F_\mu \coloneqq ~ \parallel\cdot \parallel_{\infty}- \lim\limits_{m \rightarrow \infty} F_{\mu_m^w}$ exists in $C^0([0,1])$. 
Especially $(\mu_n^w)_{n \in \mathbb{N}}$ converge weakly to a Borel probability measure on $[0,1]$. Furthermore follows
\begin{align*}
\big|\big| F_{\mu}-F_{\mu_{n}^w} \big|\big|_\infty &= \lim_{m \rightarrow \infty} \big|\big| F_{\mu_{m}^w}-F_{\mu_{n}^w} \big|\big|_\infty \\
&\le \lim_{m \rightarrow \infty} \frac{1}{w_1} \, w_2^{\min\{n,m\}} \\
&=  \frac{1}{w_1} \, w_2^{n}.
\end{align*}
Hence the claim follows.
\end{proof}
Since $\mu^w$ is the weak limit of $(\mu_n^w)_n$, we get with Lemma \ref{konv}

\begin{prop} \label{konv inv}
It holds \[\parallel F_{\mu^w} - F_{\mu^w_n}\parallel_{\infty} \le \frac{w_2^n}{w_1}.\]
\end{prop}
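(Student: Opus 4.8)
The plan is to identify the limit measure $\mu$ produced in Lemma \ref{konv} with the invariant Cantor measure $\mu^w$; once this is done, the asserted estimate is literally the bound already established in Lemma \ref{konv}.

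First I would recall from the proof of Lemma \ref{konv} that $(F_{\mu_n^w})_n$ is a Cauchy sequence in $(C^0([0,1]),\parallel\cdot\parallel_\infty)$ with uniform limit $F_\mu$, that consequently $(\mu_n^w)_n$ converges weakly to the Borel probability measure $\mu$ whose distribution function is $F_\mu$, and that $\parallel F_\mu - F_{\mu_n^w}\parallel_\infty \le w_2^n/w_1$ for every $n \in \mathbb{N}$. All of this is exactly what Lemma \ref{konv} and its proof provide.

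Next I would invoke the fact, recorded after \eqref{mass}, that $(\mu_n^w)_n$ also converges weakly to $\mu^w$. Weak limits of probability measures on the compact metric space $[0,1]$ are unique: if $\mu_n^w \rightharpoonup \mu$ and $\mu_n^w \rightharpoonup \mu^w$, then $\int f\,d\mu = \lim_n \int f\,d\mu_n^w = \int f\,d\mu^w$ for every $f \in C^0([0,1])$, which forces $\mu = \mu^w$. (Equivalently, since $\mu^w$ is non-atomic its distribution function $F_{\mu^w}$ is continuous, so weak convergence yields $F_{\mu_n^w}(t) \to F_{\mu^w}(t)$ for every $t \in [0,1]$; comparing with the uniform limit $F_\mu$ gives $F_{\mu^w} = F_\mu$ pointwise.) In particular $F_{\mu^w} = F_\mu$.

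Substituting $F_{\mu^w} = F_\mu$ into the estimate of Lemma \ref{konv} then gives $\parallel F_{\mu^w} - F_{\mu_n^w}\parallel_\infty \le w_2^n/w_1$ for all $n \in \mathbb{N}$, which is the claim. There is no real obstacle here beyond the identification $\mu = \mu^w$, and that step is routine; the only point deserving a word of care is the uniqueness of the weak limit, which holds precisely because $[0,1]$ is a compact metric space (so that a finite Borel measure is determined by its integrals against continuous functions).
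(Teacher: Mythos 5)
Your proposal is correct and follows exactly the paper's (very terse) argument: the paper simply says ``Since $\mu^w$ is the weak limit of $(\mu_n^w)_n$, we get with Lemma \ref{konv}'' the proposition, i.e.\ it identifies the limit $\mu$ from Lemma \ref{konv} with $\mu^w$ by uniqueness of weak limits and substitutes into the estimate. You have merely spelled out the uniqueness step, which the paper leaves implicit.
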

With Theorem \ref{zentral} and Theorem \ref{zentral eigenfct} we therefore get
\begin{thm}
For all $m \in \mathbb{N}_0$ holds
\begin{align*}
|\lambda_{N,m} - \lambda_{N,m,n}|\le  c(m)  w_2^n, ~~~~ \lVert f_{N,m}-f_{N,m,n}\rVert_\infty \le  c(m) w_2^n  ~~~ \text{for all } n \ge n_0(m)
\end{align*}
and for all $m \in \mathbb{N}$ holds
\begin{align*}
|\lambda_{D,m} - \lambda_{D,m,n}|\le  c(m) w_2^n, ~~~~ \lVert f_{N,m}-f_{N,m,n}\rVert_\infty\le c(m) w_2^n~~ \text{for all } n \ge n_0(m),
\end{align*}
where $c(m) > 0$ and $n_0(m) \in \mathbb{N}$ only depend on $m$.
\end{thm}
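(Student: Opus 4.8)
The plan is simply to combine the general approximation results of Section 3 with the explicit geometric rate of Proposition \ref{konv inv}. First I would check that the pair $(\mu^w,(\mu_n^w)_n)$ meets the standing hypotheses of Section 3: as noted in the paragraph preceding Lemma \ref{konv}, each $\mu_n^w$ is a non-atomic Borel probability measure on $[0,1]$, and $\mu^w$ — being the invariant measure of the Cantor IFS, supported on $F$ — is likewise a non-atomic Borel probability measure; moreover Lemma \ref{konv} (equivalently Proposition \ref{konv inv}) gives that the distribution functions $F_{\mu_n^w}$ converge uniformly to $F_{\mu^w}$. Thus $\mu := \mu^w$ together with the sequence $(\mu_n^w)_n$ is an admissible choice in Theorem \ref{zentral} and Theorem \ref{zentral eigenfct}.

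Next I would apply Theorem \ref{zentral} to this $\mu$ and this sequence. For each $m \in \mathbb{N}_0$ it provides a constant $\tilde c(m) > 0$ and an index $n_0(m)$ with
\begin{align*}
|\lambda_{N,m} - \lambda_{N,m,n}| \le \tilde c(m)\, \lVert F_{\mu^w} - F_{\mu_n^w}\rVert_\infty \qquad \text{for all } n \ge n_0(m),
\end{align*}
and analogously for the Dirichlet eigenvalues (for $m \in \mathbb{N}$). Inserting the bound $\lVert F_{\mu^w} - F_{\mu_n^w}\rVert_\infty \le w_2^n / w_1$ from Proposition \ref{konv inv} yields
\begin{align*}
|\lambda_{N,m} - \lambda_{N,m,n}| \le \frac{\tilde c(m)}{w_1}\, w_2^n \qquad \text{for all } n \ge n_0(m).
\end{align*}
Since $w_1 \in (0,1)$ is a fixed constant of the IFS, independent of $m$ and $n$, the factor $c(m) := \tilde c(m)/w_1$ still depends only on $m$, which is precisely the asserted form. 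The same substitution into the Dirichlet estimate of Theorem \ref{zentral} and into both eigenfunction estimates of Theorem \ref{zentral eigenfct} delivers the remaining inequalities, again after absorbing the fixed factor $1/w_1$ into $c(m)$ and, if necessary, enlarging $n_0(m)$ so that all of the invoked theorems apply simultaneously.

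There is essentially no analytic obstacle: the statement is a corollary obtained by chaining Theorem \ref{zentral}, Theorem \ref{zentral eigenfct} and Proposition \ref{konv inv}. The only steps worth spelling out are the verification that $(\mu^w,(\mu_n^w)_n)$ satisfies the hypotheses used throughout Section 3 (non-atomicity of all measures involved and uniform convergence of the distribution functions) and the elementary bookkeeping of constants, namely that dividing the $m$-dependent constant by the fixed weight $w_1$ introduces no dependence on $n$.
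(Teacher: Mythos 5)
Your proposal is correct and follows exactly the route the paper takes: the theorem is stated as an immediate corollary of Theorem \ref{zentral}, Theorem \ref{zentral eigenfct} and Proposition \ref{konv inv}, with the fixed factor $1/w_1$ absorbed into the constant $c(m)$. Your additional remarks on verifying non-atomicity and uniform convergence of the distribution functions are sensible bookkeeping that the paper leaves implicit.
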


\pagebreak
The following figures show the approximation of the Neumann and Dirichlet eigenvalues and the approximation of the eigenfunctions for the special case $w = (1/2,1/2)$.

\begin{figure}[h]
\begin{center}
 {\includegraphics[width=0.35\textwidth]{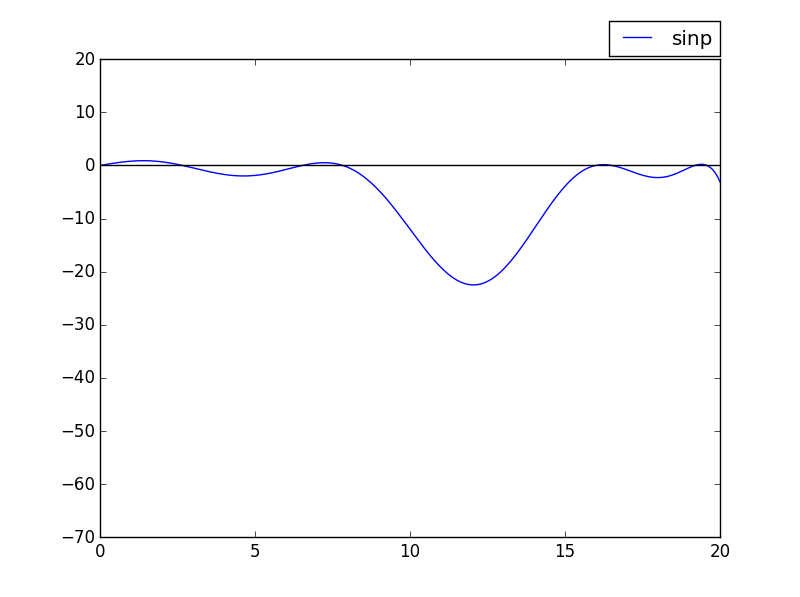}}
{\includegraphics[width=0.35\textwidth]{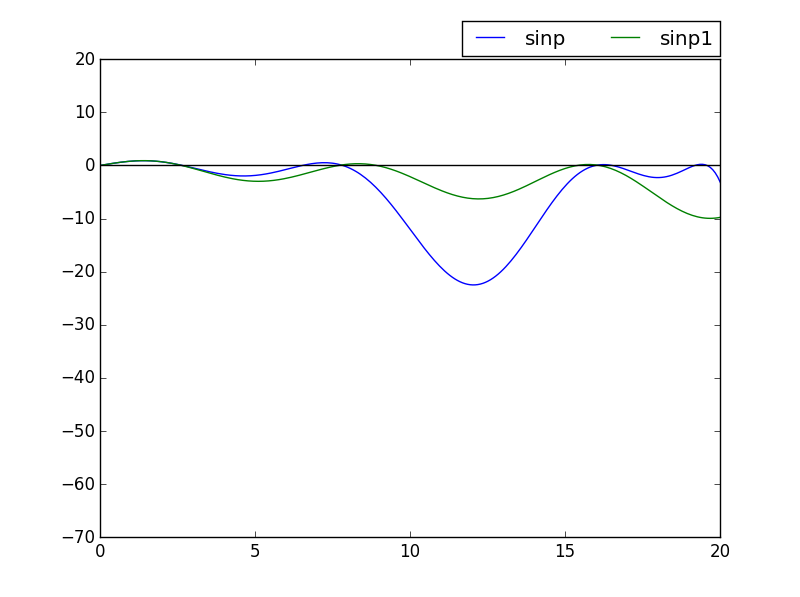}} 
{\includegraphics[width=0.35\textwidth]{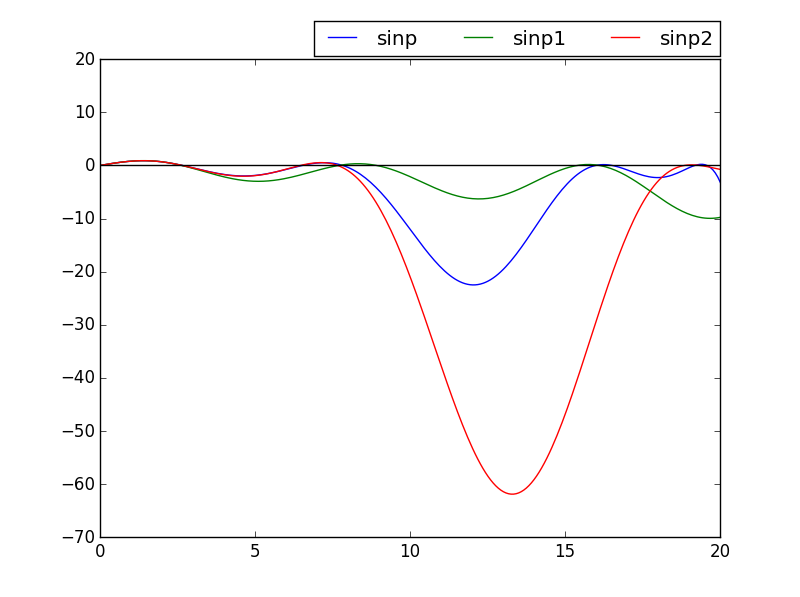}}
 {\includegraphics[width=0.35\textwidth]{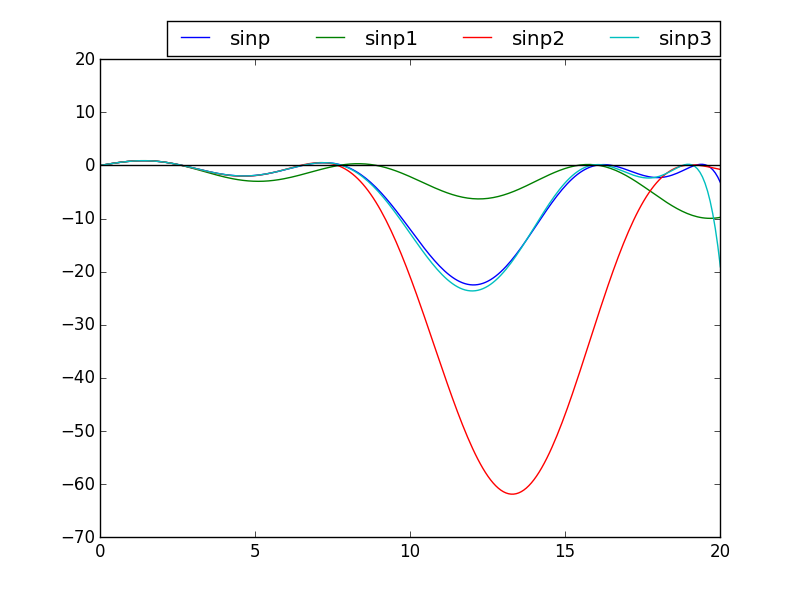}} 
\vspace{10pt}

{\includegraphics[width=0.35\textwidth]{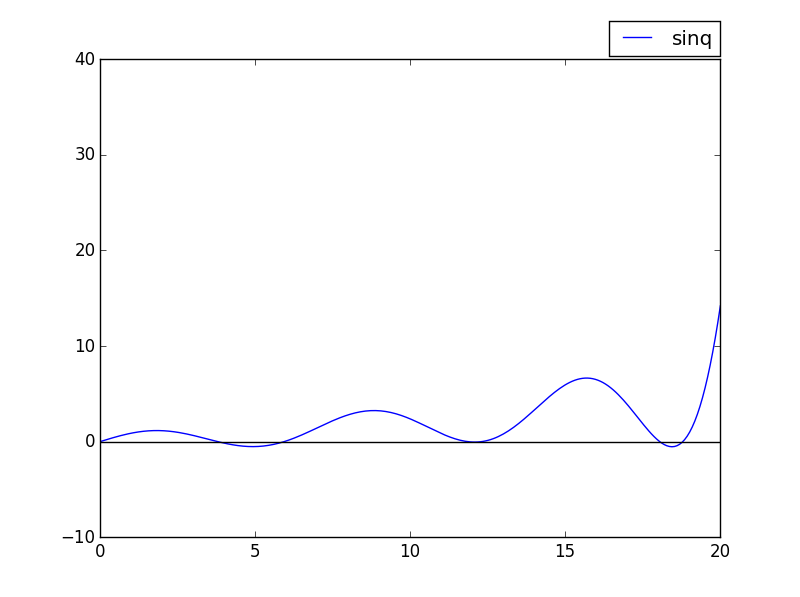}}
 {\includegraphics[width=0.35\textwidth]{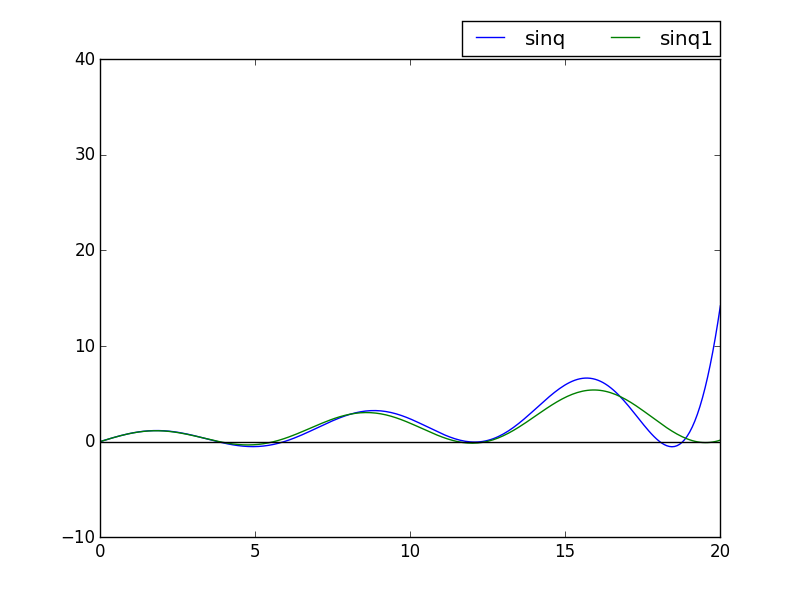}}
{\includegraphics[width=0.35\textwidth]{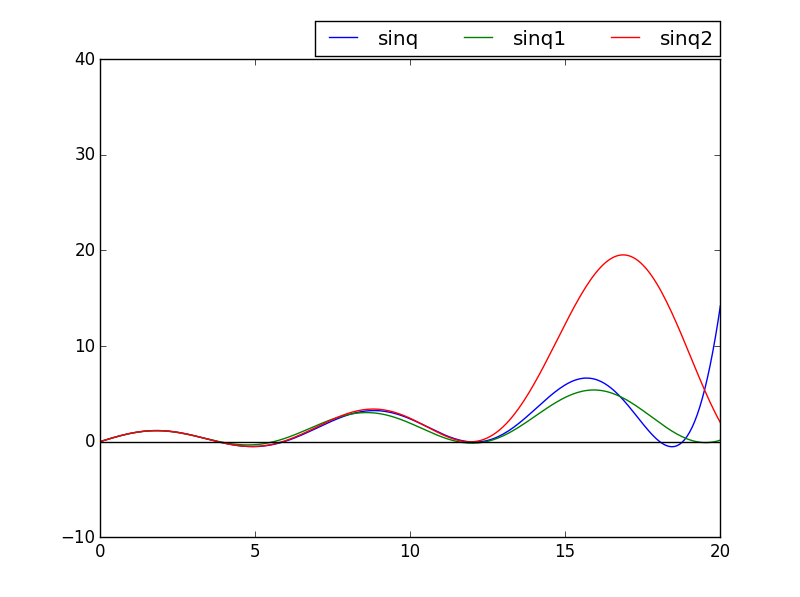}}
 {\includegraphics[width=0.35\textwidth]{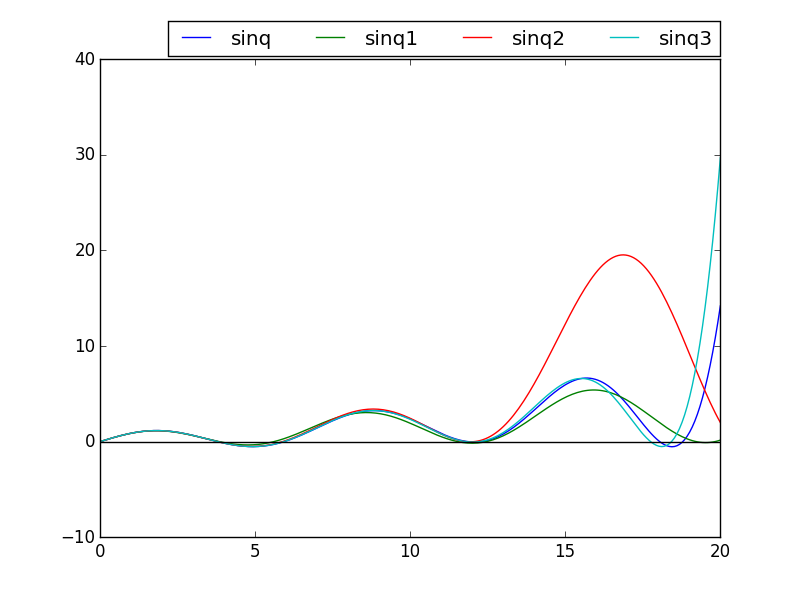}}
 \caption{Sinp and Sinq functions}
	\label{fig3}
	\end{center}
\end{figure}
$\sinp$i are the $\sinp$ functions w.r.t. $\mu_i^w$, $i=1,2,3$ and $\sinp$ is the $\sinp$ function w.r.t. $\mu^w$.
$\sinq$i are the $\sinq$ functions w.r.t. $\mu_i^w$, $i=1,2,3$ and $\sinq$ is the $\sinq$ function w.r.t. $\mu^w$. 
\newpage
The following figures show the first six Neumann and Dirichlet eigenfunctions. Thereby fN and fD are the Neumann and Dirichlet eigenfunctions w.r.t. $\mu^w$, respectively and fNi and fDi are the Neumann and Dirichlet eigenfunctions w.r.t. $\mu^w_i$, i=1,2, respectively. The $n_{\text{th}}$ Neumann and Dirichlet eigenfunction has exactly $n$ and $n+1$ zeros in $[0,1]$, $n=1,...,6$, respectively. 

\begin{figure}[h]
\begin{center}
 \captionsetup[subfigure]{labelformat=empty}
{\includegraphics[width=0.45\textwidth]{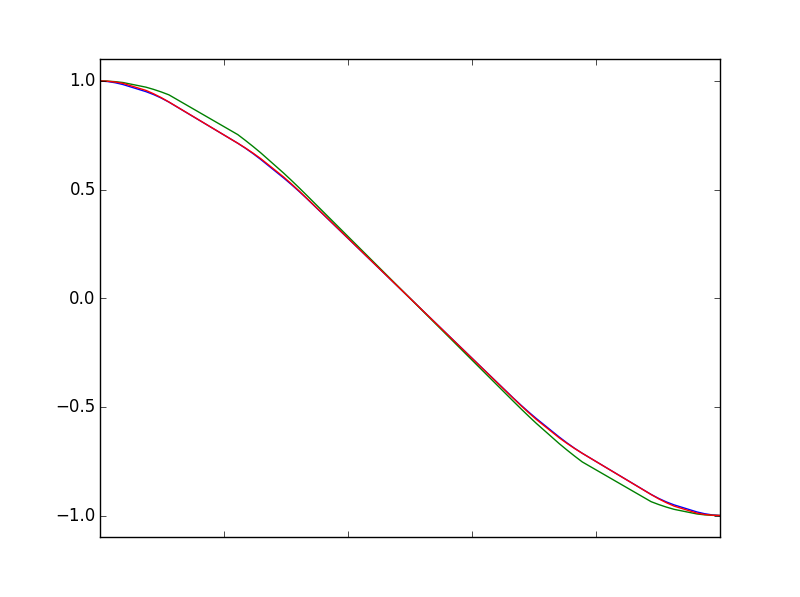}}
{\includegraphics[width=0.45\textwidth]{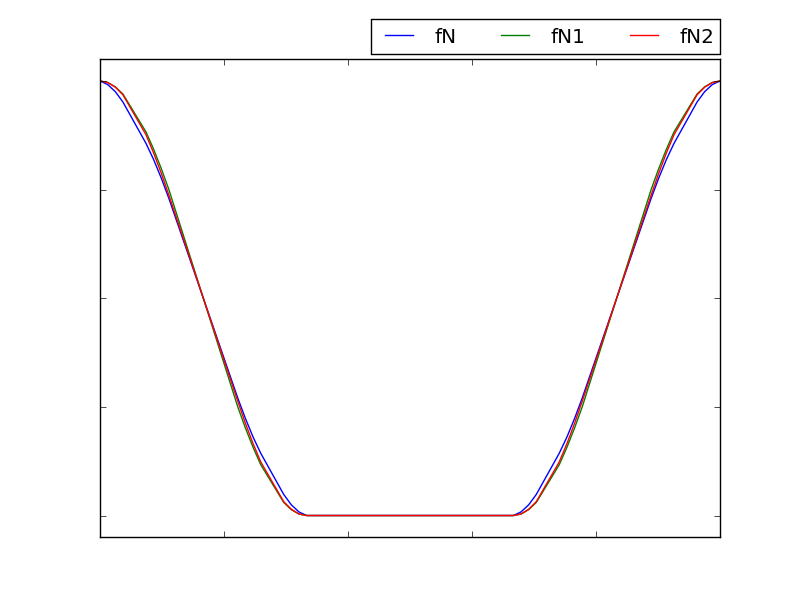}}  
{\includegraphics[width=0.45\textwidth]{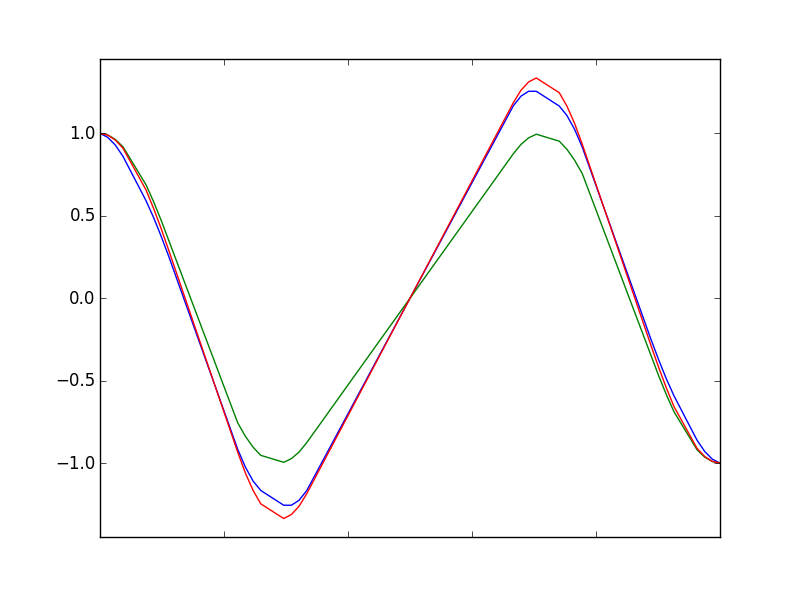}}
{\includegraphics[width=0.45\textwidth]{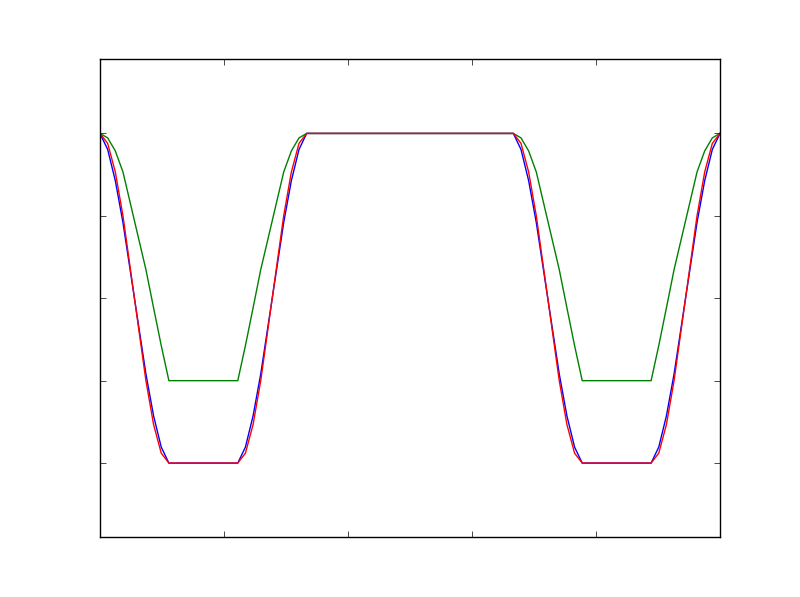}} 
{\includegraphics[width=0.45\textwidth]{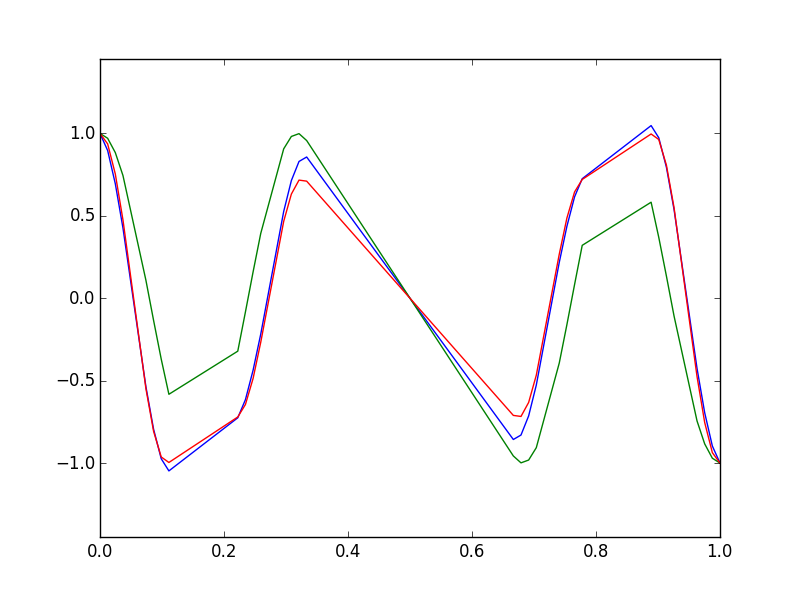}}
{\includegraphics[width=0.45\textwidth]{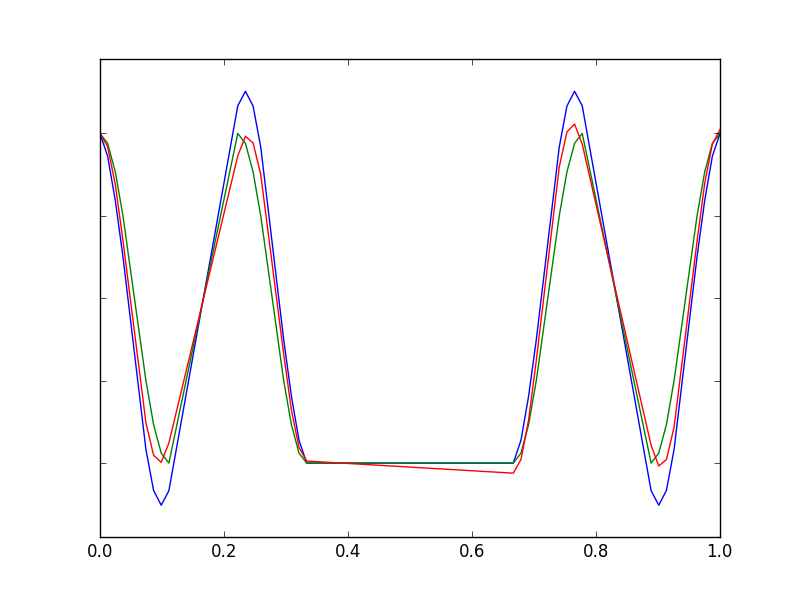}}
 \caption{Neumann eigenfunctions}
 \label{fig7}
 \end{center}
\end{figure}
\newpage
\begin{figure}[h]
 \captionsetup[subfigure]{labelformat=empty}
 \begin{center}
{\includegraphics[width=0.45\textwidth]{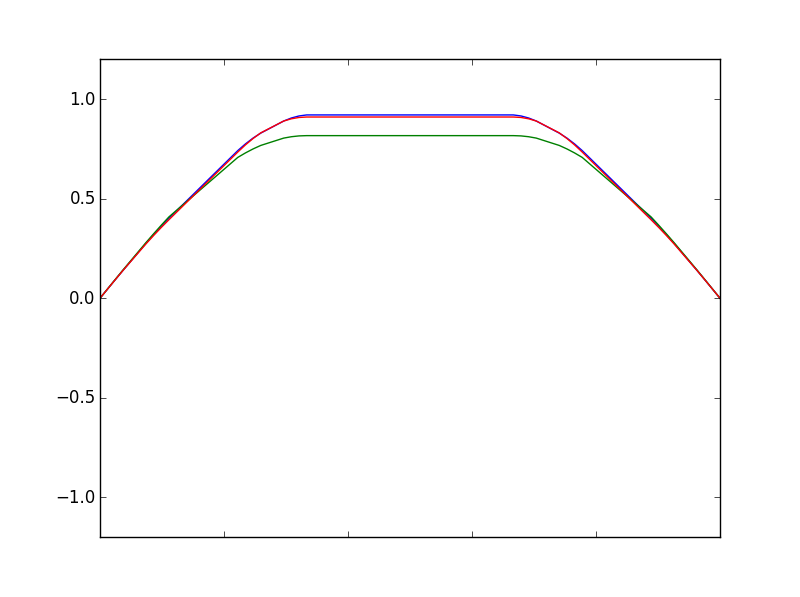}}
{\includegraphics[width=0.45\textwidth]{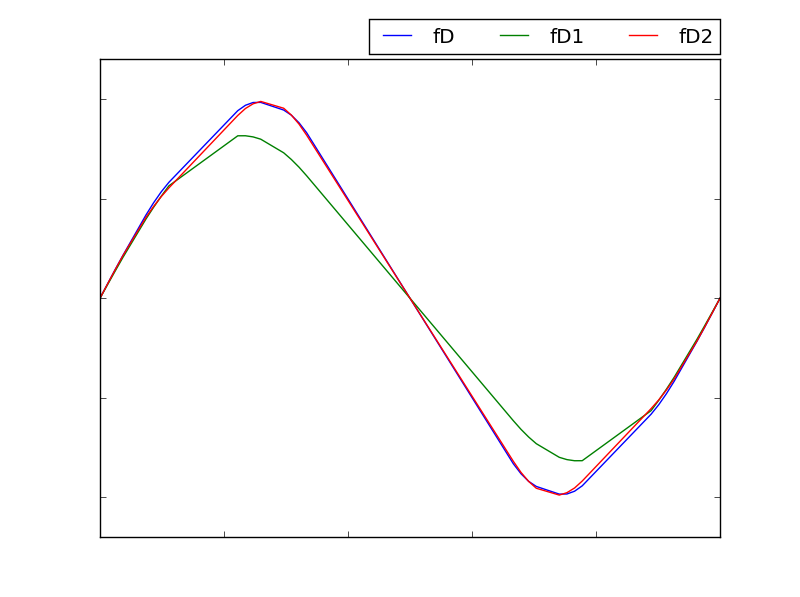}} 
{\includegraphics[width=0.45\textwidth]{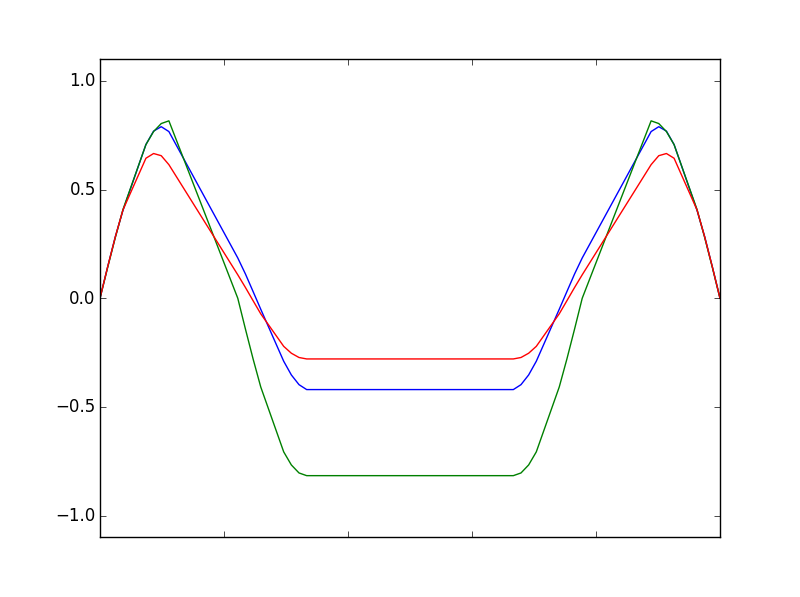}}
{\includegraphics[width=0.45\textwidth]{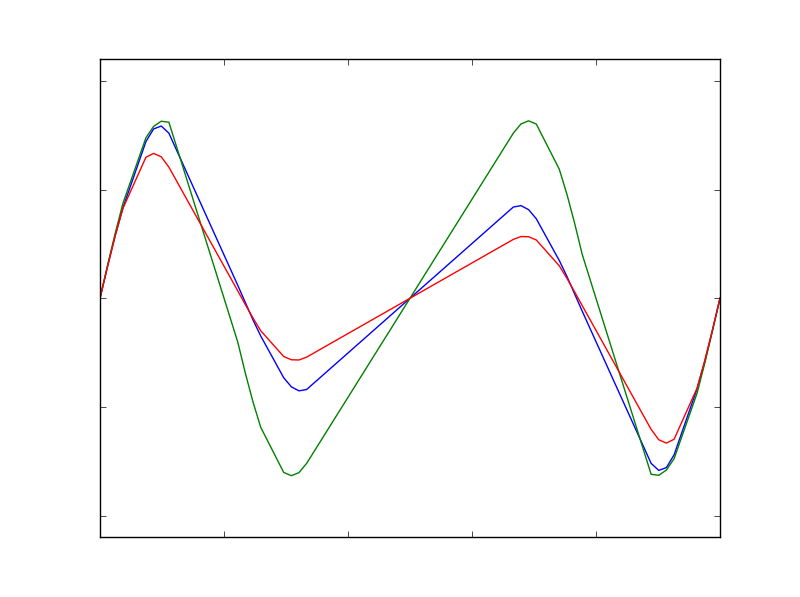}} 
{\includegraphics[width=0.45\textwidth]{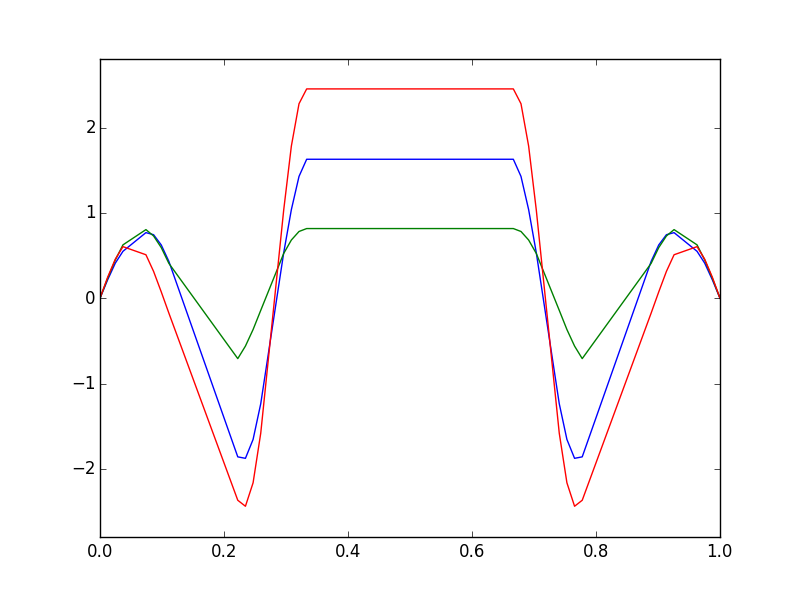}}
{\includegraphics[width=0.45\textwidth]{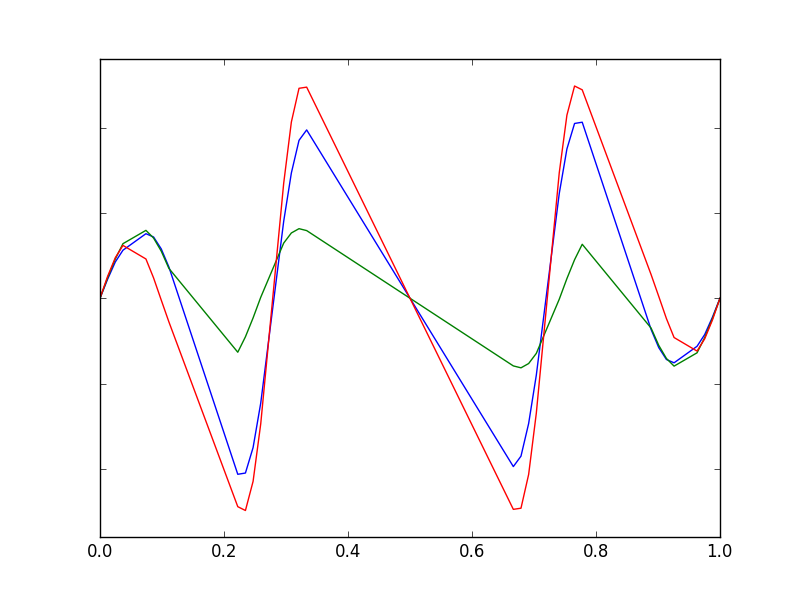}}
 \caption{Dirichlet eigenfunctions}
 \label{fig10}
 \end{center} 
\end{figure}	
\vspace{40pt}

 {\large \textbf{Acknowledgement}}\\[5pt]
The authors thank the anonymous referee for helpful suggestions for improvement. In particular, thanks to some simple changes and rearrangements, we could formulate and prove our results in much more generality.

\newpage
\clearpage

\end{document}